\newtheorem{theorem}{Theorem}[section]
\newtheorem{proposition}[theorem]{Proposition}
\newtheorem{lemma}[theorem]{Lemma}
\theoremstyle{definition}
\newtheorem{definition}[theorem]{Definition}
\theoremstyle{remark}
\newtheorem{remark}[theorem]{Remark}
\newcommand{\R}{\mathbb{R}}
\newcommand{\Z}{\mathbb{Z}}
\newcommand{\N}{\mathbb{N}}
\newcommand{\Int}{\mathrm{Int}\,}
\newcommand{\abs}[1]{\left|#1\right|}
\begin{document}
\title[A plat form presentation for surface-links]{A plat form presentation for surface-links}

\author{Jumpei Yasuda}
\address{Department of Mathematics, Graduate School of Science, Osaka University,  Toyonaka, Osaka 560-0043, Japan}

\email{u444951d@ecs.osaka-u.ac.jp}

\subjclass[2020]{Primary 57K45, Secondary 57K10}

\keywords{Surface-link, Braided surface, Plat closure, Plat index}

\begin{abstract}
   In this paper, we introduce a method, called a plat form, of describing a surface-link in the 4-space using a braided surface.
   We prove that every surface-link, which is not necessarily orientable, can be described in a plat form.
   The plat index is defined as a surface-link invariant, which is an analogy of the bridge index for a link in the 3-space.
   We classify surface-links with plat index $1$ and show some examples of surface-links in plat forms.
\end{abstract}

\maketitle

\section{Introduction}\label{Section: Introduction}

In knot theory we often use two methods of presenting links in the $3$-space using braids: One is a closed braid form as in Figure~\ref{Figure: The closure of a braid}, and the other is a plat form as in Figure~\ref{Figure: A plat form of a link}.

\begin{figure}[h]
   \centering
   \begin{minipage}[t]{0.53\hsize}
      \centering
      \includegraphics[height = 20mm]{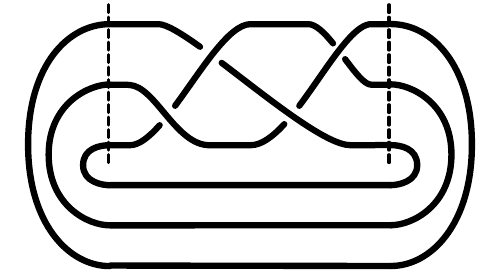}
      \caption{A closed braid form.}
      \label{Figure: The closure of a braid}
   \end{minipage}
   \begin{minipage}[t]{0.45\hsize}
      \centering
      \includegraphics[height = 20mm]{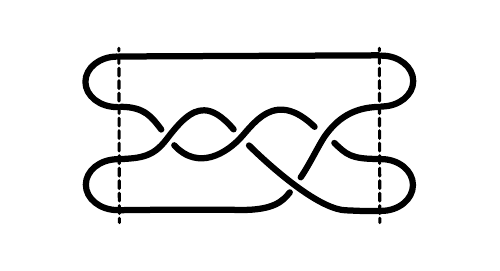}
      \caption{A plat form.}
      \label{Figure: A plat form of a link}
   \end{minipage}
\end{figure}

A \textit{surface-link} is a closed surface embedded in $\R^4$, and a $2$-knot is a $2$-sphere embedded in $\R^4$.
Two surface-links are considered to be equivalent if they are ambient isotopic in $\R^4$.
It is known that every orientable surface-link is equivalent to a surface-link in a closed $2$-dimensional braid form (cf. \cite{Kamada1992, Kamada2002_book, Viro90}).
It is an analogy of a closed braid form for a link.

The purpose of this paper is to introduce a new method of presenting a surface-link, which we call a plat form, as an analogy of a plat form for a link.

\begin{theorem}\label{Main theorem-A The existence of a plat form}
   Every surface-link is equivalent to a surface-link in a plat form.
\end{theorem}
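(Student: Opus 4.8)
The plan is to put $F$ into a normal form with respect to the height function, reduce to braiding the ``saddle level'' in the middle, and then invoke a plat-adapted, relative version of the Alexander-type theorem for surface-links.

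\textbf{Step 1: Normal form.} I would first isotope $F$ so that the restriction of $x_4$ is a Morse function in normal form: all index-$0$ critical points (minima) lie below level $0$, all index-$1$ points (saddles) at level $0$, all index-$2$ points (maxima) above level $0$; the slice $F\cap(\R^3\times\{-1/2\})$ is a trivial link $L_-$ and $F\cap(\R^3\times\{1/2\})$ is a trivial link $L_+$; and the part of $F$ below $L_-$ (resp.\ above $L_+$) consists of standard disks capping $L_-$ from below (resp.\ $L_+$ from above). Such a normal form exists for every closed surface in $\R^4$, orientable or not (cf.\ \cite{Kamada2002_book}). Thus $F=\mathcal D_-\cup\mathcal C\cup\mathcal D_+$, where $\mathcal D_\pm$ are systems of trivial disks and $\mathcal C$ is a cobordism from $L_-$ to $L_+$ built only from bands (one per saddle), sitting in a slab $\R^3\times[-\varepsilon,\varepsilon]$.

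\textbf{Step 2: Braiding the middle cobordism.} The core step is to isotope $\mathcal C$, together with the unlinks $L_\pm$ on which it is bounded, into the form of a braided surface in plat form. Concretely, I want a degree $2n$ and an isotopy carrying $\mathcal C$ into $D_1^2\times D_2^2$ so that it becomes a braided surface $\mathcal S$ over $D_2^2$, placed with respect to coordinates $D_2^2=[0,1]_s\times[0,1]_t$ so that: over $s\in\{0,1\}$ the surface is $2n$ trivial sheets, completed by the $n$ ``cups'' and $n$ ``caps'' of the plat closure; over $t=0$ and over $t=1$ the braid is trivial, so the plat closure there is a trivial link, recovering $L_-$ and $L_+$; and the branch points of $\mathcal S$ correspond exactly to the bands of $\mathcal C$, each modelled locally by $\{z^2=s+it\}$, whose movie in the variable $t$ is precisely a hyperbolic band move. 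This is a relative, plat-adapted analogue of the theorem that every orientable surface-link is a closed braided surface (\cite{Kamada1992,Viro90}): one subdivides the movie of $\mathcal C$ into elementary moves, braids each intermediate link about an axis while keeping it monotone, and promotes the band moves to branch points. What makes the plat version go through is that both ends $L_\pm$ are trivial links, which are exactly the plats of trivial braids; so nothing forces a trace-closure around the axis, and it is the cups and caps that close the picture. If the degree comes out odd, one stabilizes the braided surface to make it $2n$.

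\textbf{Step 3: Reassembly.} Regluing $\mathcal D_\pm$, these are exactly the disk systems that cap the trivial links produced at the two $t$-ends of the plat closure; hence $F$ is the plat closure of the braided surface $\mathcal S$ from Step 2, which is the desired presentation.

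\textbf{Expected main obstacle.} The delicate part is Step 2, and within it the non-orientable case. A braided surface over a disk is necessarily orientable, being a branched cover of an orientable, simply connected base; hence the non-orientability of $F$ cannot live in $\mathcal S$ and must be created by the way the plat caps reglue the $2n$ sheets. One must therefore arrange the braiding of the normal-form cobordism so that all ``non-orientable'' behaviour is absorbed into the cup/cap pattern together with the branch points --- equivalently, check that the standard plat caps are flexible enough to realise an arbitrary surface-link. This is precisely the feature distinguishing plat form from closed braided-surface form, which presents only orientable surface-links, so it is where the genuinely new work is concentrated; the orientable case of the theorem, by contrast, should follow from \cite{Kamada1992,Viro90} by essentially repackaging a closed braided surface into plat form.
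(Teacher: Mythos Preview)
Your Step 1 and the overall architecture match the paper's: reduce $F$ to an admissible banded link $(L,B)$ (Lemmas~\ref{Lem:realizing_surface_1}--\ref{Lemma: Deformation to normal banded braid form}) and then build a braided surface whose plat closure realizes the closed realizing surface. Your remark about the orientable case is also correct and is essentially the proof of Theorem~\ref{Main Theorem-B The existence of a genuine plat form}. The gap is in Step~2, and it is a real one.

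First, your description of the target does not match the definition of plat closure. The plat closure is $\widetilde S = S\cup A_S$, with $A_S$ a wicket-type surface attached along the \emph{entire} circle $\partial D_2$; there are no separate disk caps $\mathcal D_\pm$. For $A_S$ to exist, $S$ must be \emph{adequate}: the boundary braid $\beta_S$ must lift to the wicket space, equivalently lie in Hilden's subgroup $K_{2m}$ (Definition~\ref{Def:adequate}, Proposition~\ref{Proposition: Brendle-Hatcher2008}). You never mention adequacy. If instead one imposes what you actually wrote --- trivial monodromy along all four edges of $\partial D_2$ --- then $\beta_S=1$, $S$ is a $2$-dimensional braid, and you are producing a \emph{genuine} plat form. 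That forces every component to have normal Euler number zero, so for instance a single standard $P_+$ cannot be reached this way. Thus your Step~2, as stated, cannot cover all non-orientable $F$.

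Second, the sentence ``braid each intermediate link about an axis while keeping it monotone'' is the Alexander/Markov picture for \emph{trace} closures and gives no control on $\beta_S$ modulo $K_{2m}$. The paper's mechanism is different: after a suitable generalized stabilization (Proposition~\ref{Lemma: Birman's criterion for links in a plat form}), the plat-presented trivial links $L$ and $L_B$ become $\gamma\,\alpha_1\,\gamma'$ and $\delta\,\alpha_2\,\delta'$ with $\gamma,\gamma',\delta,\delta'$ adequate; these $K_{2m}$-factors are then laid along portions of $\partial D_2$, so that $\beta_S$ is by construction a product of adequate braids, hence adequate, while the bands of $B$ become the interior branch points. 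That use of Birman's plat stabilization theorem and Hilden's subgroup is exactly the missing ingredient your ``relative, plat-adapted Alexander-type theorem'' would have to contain, and it is also what carries the non-orientability into $A_S$ automatically --- the paper's argument does not split into orientable and non-orientable cases.
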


We emphasize that our method works for every surface-link, while the closed $2$-dimensional braid form works only for orientable ones.
A \textit{genuine plat form} is a special case of a plat form.
Some surface-links can be presented in genuine plat forms.

\begin{theorem}\label{Main Theorem-B The existence of a genuine plat form}
   Every orientable surface-link is equivalent to a surface-link in a genuine plat form.
\end{theorem}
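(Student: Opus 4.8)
The plan is to derive Theorem~\ref{Main Theorem-B The existence of a genuine plat form} from the fact, recalled in the introduction, that every orientable surface-link is equivalent to one in a closed $2$-dimensional braid form \cite{Kamada1992, Kamada2002_book, Viro90}, by transporting to braided surfaces the elementary observation of classical knot theory that the closure of an $m$-braid is always the plat closure of a $2m$-braid. The point is that, when one starts from an \emph{honest} closed $2$-dimensional braid, this conversion produces only trivial disks as cups and caps, which is exactly what makes the resulting plat form genuine.

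\emph{Step 1 (reduce to a closed $2$-dimensional braid).} Let $F$ be an orientable surface-link. By the cited results $F$ is equivalent to the closure $\widehat{S_0}$ of a braided surface $S_0$ of some degree $m$, the branched-covering projection being over $S^2$. Write $S^2 = D_+ \cup D_-$ as the union of two disks glued along their common boundary circle. After an ambient isotopy we may assume that every branch point of $S_0$ lies over $\Int D_+$; then the monodromy of $S_0$ around $\partial D_+$ is trivial, so over $D_-$ the braided surface $S_0$ is the trivial one on $m$ sheets. Thus $F$ is obtained from the braided surface $S_+ := S_0|_{D_+}$ over the disk $D_+$ by the braid-closure operation: the boundary $\partial S_+$, a closed braid lying over $\partial D_+$, is capped off by $m$ parallel trivial disks lying over $D_-$.

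\emph{Step 2 (braid closure $\Rightarrow$ genuine plat closure).} This is the heart of the matter. In the $1$-dimensional picture the closure of $\beta \in B_m$ is the plat closure of a suitable $2m$-braid, obtained by ``folding'' the closing arcs of $\widehat{\beta}$ into $m$ cups at the top and $m$ caps at the bottom, at the cost of doubling the number of strands and inserting a fixed auxiliary braid; every cup and cap so produced is a trivial arc. Carrying out the direct surface analogue, I would construct from $S_+$ a degree-$2m$ braided surface $\widetilde S$ over a disk $D$ --- built in a standard way from the chart of $S_+$ together with a fixed auxiliary sub-chart --- equipped with the standard plat-capping data in which, along $\partial D$, the $i$-th cup joins sheet $i$ to sheet $2m+1-i$ at the top and the $i$-th cap joins them at the bottom; I would then show that the plat closure of $\widetilde S$ is equivalent to $F$. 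Since the paired sheets are by construction adjacent and unlinked, each of the $2m$ cups and caps is an unknotted trivial disk, and they are pairwise disjoint. Hence this presentation is a \emph{genuine} plat form.

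\emph{Step 3 (verification and the expected obstacle).} It remains to check that $\widetilde S$ really is a braided surface --- the projection stays a simple branched covering, since the added part of the chart is a fixed, braid-like pattern that introduces no new branching --- and, crucially, that the plat closure of $\widetilde S$ is ambiently isotopic in $\R^4$ to $\widehat{S_0} \simeq F$, and not merely assembled from the same combinatorial data. I would prove this by constructing the ambient isotopy ``slice by slice'' in a collar of the seam $\partial D_+ = \partial D_-$: on each three-dimensional page it restricts to the classical isotopy exhibiting a braid closure as a plat closure, while the finitely many branch points, which can be kept away from both the folding region and the seam, are carried along unchanged. I expect Step 2 together with this verification to be the main obstacle --- writing the auxiliary sub-chart down precisely and controlling the isotopy near the branch points and the seam is the delicate part, while the rest is bookkeeping. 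Finally, orientability of $F$ enters only through Step 1, and no non-orientable capping piece is ever introduced; this is exactly what separates a genuine plat form from a general one, so the construction cannot work verbatim for non-orientable $F$, consistently with Theorem~\ref{Main theorem-A The existence of a plat form}. (Alternatively one might try to obtain the theorem by specialising the proof of Theorem~\ref{Main theorem-A The existence of a plat form} to the orientable case and checking that all of its capping pieces are then trivial disks; the route through closed $2$-dimensional braids, however, seems more transparent.)
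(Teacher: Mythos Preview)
Your plan is essentially the paper's proof: both start from Proposition~\ref{Proposition: Alexander's theorem in four dimension} and then lift the classical ``braid closure $=$ plat closure of a $2m$-braid'' trick to dimension four. The paper makes your ``fixed auxiliary sub-chart'' completely explicit: it introduces the $2m$-braid $\Delta_m=\prod_{k=1}^{m-1}(\sigma_{2k}\sigma_{2k-1}\cdots\sigma_1)$, for which the closure of any $b\in B_m$ is the plat closure of $\Delta_m\,\iota_m^{2m}(b)\,\Delta_m^{-1}$, and builds the new $2$-dimensional $2m$-braid $S_2$ slice by slice as $(S_2)_{[t]}=\Delta_m\cdot(S_1)_{[t]}\cdot\Delta_m^{-1}$ for $t\in[0,1]$ (with $S_1=S$ plus $m$ trivial sheets), padded on $[-\varepsilon,0]$ and $[1,1+\varepsilon]$ by an isotopy between $1_{2m}$ and $\Delta_m\Delta_m^{-1}$; your Step~3 then reduces to comparing two motion pictures.

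One point to tighten: in this paper ``genuine plat form'' means, by definition, that the underlying braided surface is a \emph{$2$-dimensional braid}, i.e.\ its boundary braid $\beta_S$ is trivial. The attached piece $A_S$ is then $w_0\times S^1$, a union of $m$ annuli, not disks. So your justification ``each of the cups and caps is an unknotted trivial disk, hence genuine'' is not the correct criterion here; what you need to check --- and what the padding by $\Delta_m\Delta_m^{-1}\sim 1_{2m}$ ensures in the paper's construction --- is that $\partial S_2$ is the trivial closed $2m$-braid.
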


We show that the normal Euler number $e(F)$ of a surface-link $F$ in a genuine plat form is zero (Proposition~\ref{Proposition: normal Euler number of genuine plat form}).
It is unknown to the author whether every surface-link with $e(F) = 0$ is equivalent to one in a genuine plat form.

We define two surface-link invariants, which are called the \textit{plat index} and the \textit{genuine plat index}, denoted by $\mathrm{Plat}(F)$ and $\mathrm{g.Plat}(F)$, respectively.
These are analogies of the plat index, or the bridge index, of a link.

Using a theory of braided surfaces and $2$-dimensional braids, we show that a surface-link $F$ with $\mathrm{Plat}(F) = 1$ or with $\mathrm{g.Plat}(F) = 1$ is trivial (Theorem~\ref{Theorem The classification of trivial surface-knots}) and that a $2$-knot with $\mathrm{g.Plat}(F) = 2$ is ribbon (Theorem~\ref{Theorem genuine 2-plat 2-knot is ribbon}).
We also see an example of a $2$-knot whose plat index and genuine plat index are different (Proposition~\ref{Proposition plat index of 2-twist spun twist knot}).
An example of a non-trivial surface-link in a plat form is shown in Figure~\ref{Figure: mp of 2-twist spun trefoil} by using a motion picture (Proposition~\ref{Proposition plat index of 2-twist spun twist knot}).

\begin{figure}[h]
   \centering
   \includegraphics[width = 0.8\hsize]{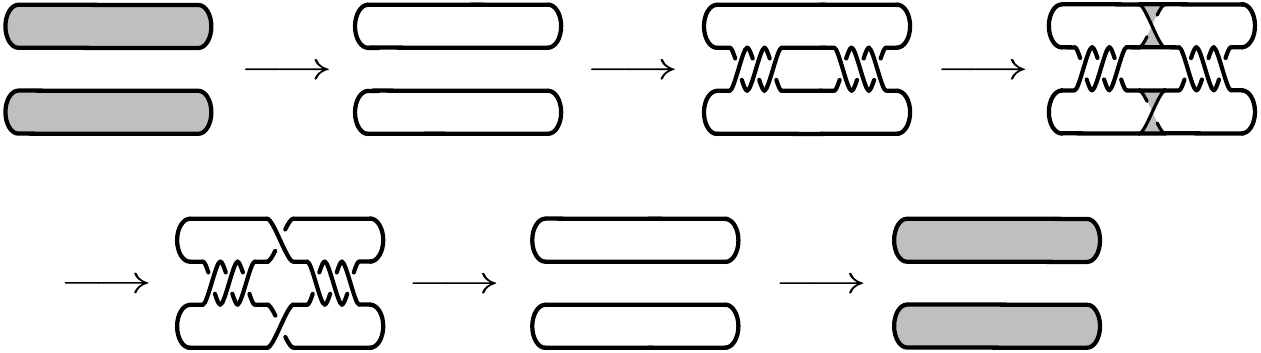}
   \caption{The $2$-twist spun trefoil in a (normal) plat form.}
   \label{Figure: mp of 2-twist spun trefoil}
\end{figure}

This paper is organized as follows.
In Section \ref{Section: Preliminaries}, we recall the notions of braids, surface-links, and braided surfaces.
We also recall the definition of a plat form for a link.
In Section \ref{Section: A plat form for a surface-link}, we define a (normal) plat form and a genuine plat form for a surface-link.
In Section \ref{Section: existence of plat forms}, we prove Theorems \ref{Main theorem-A The existence of a plat form} and \ref{Main Theorem-B The existence of a genuine plat form}.
In Section \ref{Section: The plat index of surface-links and examples}, we discuss the plat index and the genuine plat index of a surface-link, and show some examples.

We work in the PL or smooth category.
Surfaces embedded in the $4$-space are assumed to be locally flat in the PL category.

\section{Preliminaries}\label{Section: Preliminaries}
\subsection{A plat form presentation for a link}
Let $n$ be a positive integer, $I = [0,1]$ the interval, $D$ the square $I^2$ in $\R^2$, $\Int D$ the interior of $D$, and $Q_n =\{ q_1, \dots, q_n\}$ the subset of $n$ points in $D$ such that $q_k=\left({1}/{2},  {k}/{(n+1)} \right)$ for $k=1,2,\dots, n$.

An \textit{$n$-braid} is a union of $n$ intervals $\beta$ embedded in $D\times I$ such that each component intersects with every open disk $\Int D\times \{t\}$ ($t\in I$) transversely at a single point, and $\partial \beta = Q_n\times \{0,1\}$.
The $n$-braid group $B_n$ is the group consisting of equivalence classes of $n$-braids in $D\times I$.
The braid group $B_n$ is identified with the fundamental group $\pi_1(\mathcal{C}_n, Q_n)$ of the configuration space
$\mathcal{C}_n$ of $n$ points of $\Int D$.
We denote by $\sigma_1, \sigma_2, \dots, \sigma_{n-1}$ the standard generators of $B_n$ or their representatives due to Artin (\cite{Artin1925}).

To define the plat closure of a braided surface in Section~\ref{Section: A plat form for a surface-link}, we introduce the space of $m$ wickets.

\begin{definition}[\cite{Brendle-Hatcher2008}]
   A \textit{wicket} is a semicircle in $D \times I$ that meets $D \times \{0\}$ orthogonally at its endpoints in $\Int D\times \{0\}$.
   A \textit{configuration of $m$ wickets} is a disjoint union of $m$ wickets in $D\times I$.
   The \textit{space of $m$ wickets} $\mathcal{W}_m$ is the space consisting of all configurations of $m$ wickets.
\end{definition}

For a configuration $w= w_1 \cup \dots \cup w_m$ of $m$ wickets, we denote by $|\partial w|$ the $2m$ points $\partial w_1 \cup \dots \cup \partial w_m$ in $\Int D$, which is identified with $\Int D \times \{0\}$, and by $\partial w$ the $2m$ points $|\partial w|$ equipped with the partition $\{\partial w_1, \ldots, \partial w_m\}$.
Note that if two configurations $w$ and $w'$ satisfy $\partial w = \partial w'$, then $w=w'$.

The set $Q_{2m}$ equipped with the partition $\{ \{q_1, q_2\}, \dots, \{q_{2m-1}, q_{2m}\} \}$ bounds a unique configuration of $m$ wickets, which we call the \textit{standard configuration of $m$ wickets} and denote by $w_0$.

The fundamental group $\pi_1(\mathcal{W}_m, w_0)$ is called the \textit{wicket group} in \cite{Brendle-Hatcher2008}.
Let $|\partial|:  (\mathcal{W}_m, w_0) \to (\mathcal{C}_{2m}, Q_{2m})$ be the continuous map sending $w$ to $|\partial w|$.
It induces a homomorphism  $|\partial|_\ast:   \pi_1(\mathcal{W}_m, w_0) \to \pi_1(\mathcal{C}_{2m}, Q_{2m}) =B_{2m}$.

\textit{Hilden's subgroup} $K_{2m}$ is the subgroup of $B_{2m}$ generated by $\sigma_1$, $\sigma_2\sigma_1\sigma_3\sigma_2$, and $\sigma_{2i} \sigma_{2i-1} \sigma_{2i+1}^{-1} \sigma_{2i}^{-1}$ for $i = 1, \dots, m-1$ (\cite{Hilden1975}, cf. \cite{Birman1976}).

\begin{proposition}[\cite{Brendle-Hatcher2008}]\label{Proposition: Brendle-Hatcher2008}
   For each positive integer $m$, the homomorphism $|\partial|_\ast:   \pi_1(\mathcal{W}_m, w_0) \to \pi_1(\mathcal{C}_{2m}, Q_{2m}) =B_{2m}$ is injective and the image is Hilden's subgroup $K_{2m}$.
   Namely, the wicket group $\pi_1(\mathcal{W}_m, w)$ is isomorphic to Hilden's subgroup $K_{2m}$.
\end{proposition}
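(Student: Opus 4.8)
The plan is to reinterpret $\pi_1(\mathcal{W}_m, w_0)$ as a mapping class group and to recognize $|\partial|_\ast$ as a restriction homomorphism. Since $\mathcal{W}_m$ is path-connected (every configuration of $m$ disjoint wickets being carried to $w_0$ by an ambient isotopy of $D\times I$ fixing $\partial(D\times I)$), the evaluation map $\phi\mapsto\phi(w_0)$ from $\Diff(D\times I, \partial(D\times I))$ to $\mathcal{W}_m$ is a fibration onto $\mathcal{W}_m$ with fiber the subgroup $\Diff(D\times I,\partial(D\times I); w_0)$ of diffeomorphisms preserving $w_0$ setwise; as $\Diff(D\times I,\partial(D\times I))$ is contractible (Cerf, Hatcher), the long exact sequence of homotopy groups gives a natural isomorphism $\pi_1(\mathcal{W}_m, w_0)\cong\pi_0\Diff(D\times I,\partial(D\times I); w_0)$. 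Tracing through the identifications, $|\partial|_\ast$ becomes the map sending the isotopy class of such a $\phi$ to the braid $[\phi|_{D\times\{0\}}]\in B_{2m}$ recording the motion of $Q_{2m}$. Because $\phi$ preserves $w_0$, this braid carries the trivial $m$-string tangle $w_0\subset D\times I$ to itself; by Hilden's determination of the group of such braids (\cite{Hilden1975}, cf. \cite{Birman1976}), which is precisely the defining generating set of $K_{2m}$, the image of $|\partial|_\ast$ is contained in $K_{2m}$.

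For the reverse inclusion I would realize each generator listed in the definition of $K_{2m}$ by an explicit based loop of wickets in $\mathcal{W}_m$: a half-turn of one wicket about the perpendicular bisector of its chord realizes $\sigma_1$, while sliding one wicket over — respectively under — an adjacent wicket and back to standard position realizes the generators of $\sigma_2\sigma_1\sigma_3\sigma_2$-type and of $\sigma_{2i}\sigma_{2i-1}\sigma_{2i+1}^{-1}\sigma_{2i}^{-1}$-type. One checks that each such motion keeps the $m$ semicircles pairwise disjoint and returns to $w_0$ — the Note that the enhanced boundary determines the wicket makes the latter immediate — and that the induced braid is the asserted word; this is a routine picture-chase. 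Combined with the previous paragraph, this shows that the image of $|\partial|_\ast$ equals $K_{2m}$.

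The remaining and principal point is the injectivity of $|\partial|_\ast$: a diffeomorphism $\phi$ of $D\times I$ fixing $\partial(D\times I)$ and preserving $w_0$, whose restriction to $D\times\{0\}$ induces the trivial braid on $Q_{2m}$, must be isotopic to the identity through diffeomorphisms of the same kind. Triviality of the braid lets one first isotope $\phi$ (rel $\partial(D\times I)$) to be the identity on $D\times\{0\}$, after which $\phi$ represents an element of $\pi_0\Diff(B^3,\partial B^3)=1$; the difficulty is that this isotopy may drag the wickets, so one must control the ambient motion of $w_0$ throughout — equivalently, one must show that $\mathcal{W}_m$ is aspherical, or that the mapping class group rel boundary of the trivial $m$-string tangle in a $3$-ball is trivial. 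I would establish this inductively, following \cite{Brendle-Hatcher2008}: in a suitable model the forgetful map $\mathcal{W}_m\to\mathcal{W}_{m-1}$ deleting one wicket is a fibration whose fiber is the space of wickets disjoint from a fixed configuration of $m-1$ wickets, and one shows this fiber is connected and aspherical; the long exact sequence of homotopy groups, with base case $\mathcal{W}_1\cong\mathcal{C}_2$ (so that $\pi_1(\mathcal{W}_1)\cong B_2=\langle\sigma_1\rangle=K_2$) and with the new fiber generators matching the remaining generators of $K_{2m}$, then simultaneously yields the asphericity of $\mathcal{W}_m$ and the injectivity of $|\partial|_\ast$. Verifying that the forgetful map is a fibration and pinning down the homotopy type of its fiber is the technical heart of the matter, and for the present purposes it is enough to quote \cite{Brendle-Hatcher2008}.
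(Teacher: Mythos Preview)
The paper does not prove this proposition at all: it is stated with the attribution \cite{Brendle-Hatcher2008} and immediately used, with no argument supplied. Your proposal therefore goes well beyond what the paper does, and there is nothing in the paper to compare it against. For what it is worth, the sketch you give---identifying $\pi_1(\mathcal{W}_m)$ with a mapping class group via contractibility of $\Diff(B^3,\partial B^3)$, realizing the Hilden generators by explicit wicket motions, and deducing injectivity from asphericity of $\mathcal{W}_m$ via the forgetful fibration $\mathcal{W}_m\to\mathcal{W}_{m-1}$---is indeed the shape of the Brendle--Hatcher argument, and you correctly flag that the fiber analysis is the technical core one would have to cite.
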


The isomorphism from $\pi_1(\mathcal{W}_m, w)$ to $K_{2m}$ is restated as follows:
Let $f: (I, \partial I)\to (\mathcal{W}_m, w_0)$ be a loop.
Consider a $2m$-braid $\beta_f = \bigcup_{t\in I} |\partial f(t)|\times \{t\} \subset D\times I$, then the isomorphism  sends $[f]\in \pi_1(\mathcal{W}_m, w)$ to $ [\beta_f] \in K_{2m}$.

\begin{definition}
   A loop $g: (I, \partial I) \to (\mathcal{C}_{2m}, Q_{2m})$ is \textit{liftable} if there exists a loop $f: (I, \partial I) \to (\mathcal{W}_m, w_0)$ such that $g = |\partial| \circ f$.
\end{definition}

\begin{definition}
   A $2m$-braid $\beta$ in $D\times I$ is \textit{adequate} or \textit{wicket-adequate} if the associated loop
   $g: (I, \partial I) \to (\mathcal{C}_{2m}, Q_{2m})$ is liftable, namely, there exists a loop $f: (I, \partial I)\to (\mathcal{W}_m, w_0)$ such that $\beta = \beta_f$.
\end{definition}

Note that Hilden's subgroup $K_{2m}$ consists of the elements of $B_{2m}$ represented by some adequate $2m$-braids.

Let $\beta$ be a $2m$-braid in $D \times I \subset \R^2 \times \R = \R^3$.
Attach a pair of the standard configurations of $m$ wickets to $\beta$ as in Figure~\ref{Figure: A plat form of trefoil}, and we obtain a link which is called the \textit{plat closure} of $\beta$ and denoted by $\widetilde{\beta}$.
A link is said to be \textit{in a plat form} when it is the plat closure of a braid.
Every link is equivalent to a link in a plat form.

\begin{figure}[h]
   \centering
   \includegraphics[width = 0.8\hsize]{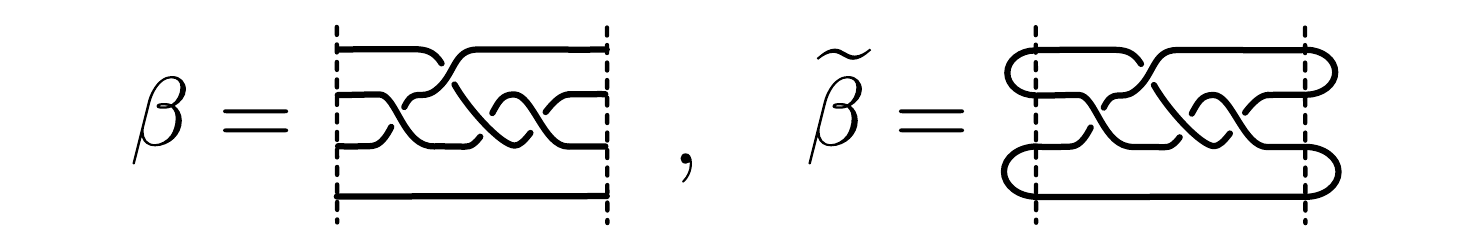}
   \caption{The plat closure of a braid.}
   \label{Figure: A plat form of trefoil}
\end{figure}

In Section~\ref{Section: A plat form for a surface-link} we introduce a plat form of a surface-link in $\R^4$.
We will also introduce a \textit{normal plat form}, which is a plat form satisfying a nice condition such that its motion picture is easy to describe.

To define a normal plat form of a surface-link in Section~\ref{Section: A plat form for a surface-link}, we construct an isotopic deformation changing the plat closure of an adequate braid to the plat closure of the trivial braid as follows:
Let $f: (I, \partial I)\to (\mathcal{W}_m, w_0)$ be a loop.
For each $t\in I$, let $\beta_t$ be $\bigcup_{s\in I} |\partial f((1-t)s)| \times \{s\}$ in $D\times I$, which is a union of $2m$ arcs.
We denote by $L_t$ a link obtained from $\beta_t$ by attaching the configuration $f(t)$ of $m$ wickets to the side of $D\times \{1\}$ and the standard configuration $w_0$ to the side of $D\times \{0\}$ in $\R^3$.
See Figure~\ref{Figure: mp of braid equipped with wickets 2}.
Then, $\{ L_t \}_{t\in I}$ is a $1$-parameter family of links in $\R^3$ such that $L_0$ is $\widetilde{\beta_f}$ and $L_1$ is the plat closure of the trivial $2m$-braid as in Figure~\ref{Figure: mp of braid equipped with wickets 2}.
We call $\{L_t\}_{t\in I}$ the \textit{isotopic deformation changing $\widetilde{\beta_f}$ to the plat closure of the trivial braid}.

As a corollary, the plat closure of an adequate $2m$-braid is an $m$-component trivial link.

\begin{figure}[h]
   \centering
   \includegraphics[width = 0.9\hsize]{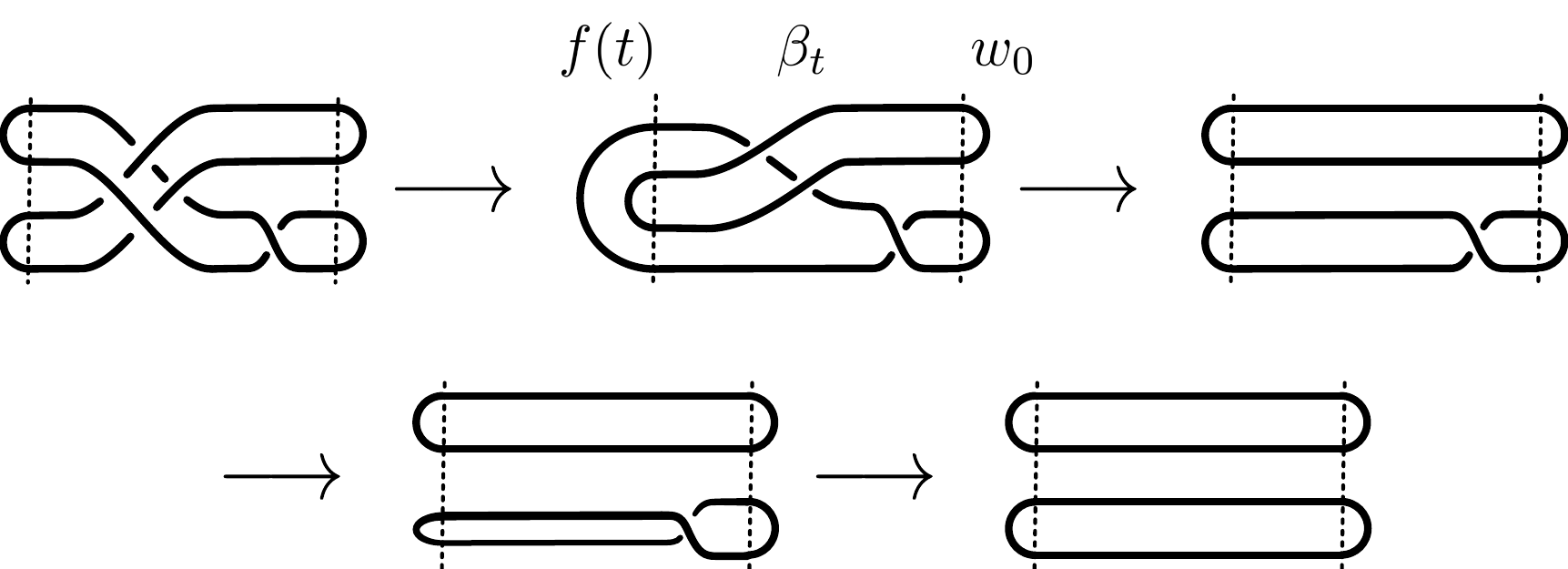}
   \caption{The isotopic deformation changing $\widetilde{\beta_f}$ to the plat closure of the trivial braid.}
   \label{Figure: mp of braid equipped with wickets 2}
\end{figure}

\subsection{Surface-links}
A \textit{surface-link} is a closed surface embedded in $\R^4$, and a \textit{surface-knot} is a connected surface-link.
A \textit{$2$-knot} is a surface-knot homeomorphic to a $2$-sphere.
A \textit{$2$-link} is a surface-link consisting of $2$-spheres.
Two surface-links $F$ and $F'$ are said to be \textit{equivalent} if they are ambient isotopic in $\R^4$.
We denote it by $F \simeq F'$ that $F$ and $F'$ are equivalent.

Let $h:\R^3\times \R^1 \to \R^1$ be the projection onto the second factor.
Set $F_{[t]} = F \cap \R^3\times \{t\}$ for $t\in \R$, which is called the \textit{cross-section} of $F$ at $t$.
A \textit{motion picture} of $F$ is a $1$-parameter family $\{F_{[t]}\}_{t\in \R}$.
We often describe surface-links using motion pictures.

A surface-knot is \textit{trivial} if it is equivalent to a connected sum of standardly embedded $2$-spheres, tori, and projective planes (\cite{Hosokawa-Kawauchi1979}).
Here standardly embedded projective planes $P_+$ and $P_-$ are illustrated in Figure~\ref{Figure: mp of P2}.

\begin{figure}[h]
   \centering
   \includegraphics[width = 0.9\hsize]{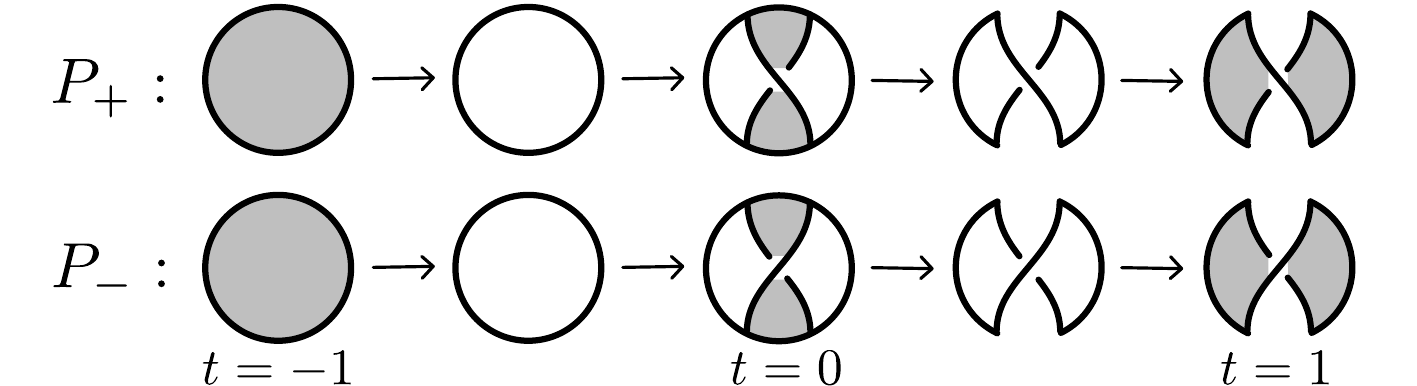}
   \caption{Motion pictures of $P_+$ and $P_-$.}
   \label{Figure: mp of P2}
\end{figure}

\subsection{Braided surfaces and $2$-dimensional braids}

A braided surface was introduced by Rudolph \cite{Rudolph1983} and a \textit{$2$-dimensional braid} was introduced by Viro (cf.~\cite{Kamada1994-08, Kamada1994-01, Kamada2002_book}).
Let $D_1$ and $D_2$ be the squares $I^2 \subset \R^2$ and $\mathrm{pr}_i : D_1\times D_2 \to D_i$ $(i=1,2)$ the projection onto the $i$-th factor.
Let $y_0 \in \partial D_2$ be a fixed base point.

\begin{definition}[\cite{Rudolph1983}, \cite{Viro90}]
   A (\textit{pointed}) \textit{braided surface} of degree $n$ is a surface $S$ embedded in $D_1\times D_2$ satisfying the following conditions:
   \begin{enumerate}
      \item $\pi_S = \mathrm{pr}_2|_S: S \to D_2$ is a simple branched covering map of degree $n$ (i.e., the preimage of each branch locus consists of $n-1$ points).
      \item $\partial S$ is the closure of an $n$-braid in the solid torus $D_1\times \partial D_2$.
      \item $\mathrm{pr_1}(\pi_S^{-1}(y_0))=Q_n$.
   \end{enumerate}
   In particular, a \textit{$2$-dimensional braid} of degree $n$ is a braided surface $S$ of degree $n$ such that $\partial S$ is trivial, i.e., $\mathrm{pr_1}(\pi_S^{-1}(y))=Q_n$ for all $y \in \partial D_2$.
\end{definition}

The degree of $S$ is denoted by $\deg S$.
We say that two braided surfaces of the same degree are \textit{equivalent} if they are ambient isotopic by an isotopy $\{h_s\}_{s\in I}$ of $D_1\times D_2$ such that each $h_s$ ($s\in I$) is fiber-preserving when we regard $D_1 \times D_2$ as the trivial $D_1$-bundle over $D_2$, and the restriction of $h_s$ to ${\mathrm{pr}_2^{-1}(y_0)}$ is the identity map.
A braided surface is \textit{trivial} if it is equivalent to $Q_n\times D_2$.

\begin{lemma}[cf. \cite{Kamada2002_book}]\label{Lemma: Triviality of braided surface}
   A braided surface $S$ is trivial if and only if $S$ has no branch points.
\end{lemma}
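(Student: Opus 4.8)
The ``only if'' direction follows from the invariance of the Euler characteristic. For the degree-$n$ simple branched cover $\pi_S\colon S\to D_2$ with $b$ branch points, the Riemann--Hurwitz formula gives $\chi(S)=n\,\chi(D_2)-b=n-b$; since $Q_n\times D_2$ is a disjoint union of $n$ disks, $\chi(Q_n\times D_2)=n$, so if $S$ is equivalent to $Q_n\times D_2$ then $\chi(S)=n$ and $b=0$. (Alternatively: a fiber-preserving ambient isotopy carries branch points of $\pi_S$ to branch points of the projection of the image, so their number is an invariant of the equivalence class.)

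For the ``if'' direction, assume $S$ has no branch points, so that $\pi_S\colon S\to D_2$ is an honest (unbranched) $n$-fold covering map. Since $D_2\cong I^2$ is simply connected, this covering is trivial: $S=S_1\sqcup\dots\sqcup S_n$ with each $\pi_S|_{S_i}\colon S_i\to D_2$ a homeomorphism. Hence $S_i$ is the graph of a map $b_i\colon D_2\to D_1$, and since $\partial S$ lies in $\Int D_1\times\partial D_2$ and $S$ is properly embedded we have $b_i(D_2)\subset\Int D_1$. After relabelling so that $b_i(y_0)=q_i$, disjointness of the $S_i$ means $b_i(y)\neq b_j(y)$ for all $y\in D_2$ and $i\neq j$; thus $B\colon y\mapsto(b_1(y),\dots,b_n(y))$ maps $D_2$ into the ordered configuration space $\mathcal{F}_n$ of $n$ distinct points of $\Int D_1$, with $B(y_0)=(q_1,\dots,q_n)$.

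The plan is now to straighten $B$ to the constant map by exploiting the contractibility of $D_2$ rel $y_0$. Let $c_s\colon D_2\to D_2$, $c_s(y)=(1-s)y+sy_0$, be the straight-line contraction (so $c_0=\id$, $c_1\equiv y_0$, and $c_s(y_0)=y_0$ for all $s$), and set $b_i^{\,s}:=b_i\circ c_s$. For every $s$ the maps $b_i^{\,s}$ remain pairwise disjoint fiberwise, with $b_i^{\,0}=b_i$, $b_i^{\,1}\equiv q_i$, and $b_i^{\,s}(y_0)=q_i$. I would then cover this level-preserving isotopy of the $n$-valued section by a level-preserving ambient isotopy $\{h_s\}_{s\in I}$ of $D_1\times D_2$ with $h_0=\id$, each $h_s$ fiber-preserving, $h_s=\id$ on $\mathrm{pr}_2^{-1}(y_0)$ (possible because the section is constant over $y_0$), and $h_s\!\left(\bigsqcup_i\{(b_i(y),y):y\in D_2\}\right)=\bigsqcup_i\{(b_i^{\,s}(y),y):y\in D_2\}$. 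Then $h_1(S)=Q_n\times D_2$, so $S$ is trivial.

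The bookkeeping in the first two paragraphs is routine; the step I expect to be the main obstacle is the last one --- upgrading the obvious null-homotopy of $B$ rel $y_0$ to an \emph{honest} fiber-preserving ambient isotopy that simultaneously fixes the base-point fiber. The cleanest justification uses obstruction theory for the evaluation fibration $\Diff(D_1,\partial D_1)\to\mathcal{F}_n$, $\varphi\mapsto(\varphi(q_1),\dots,\varphi(q_n))$: one lifts $B$, parametrized over the contractible space $D_2$ and rel $y_0$, to a map $\widetilde B\colon D_2\to\Diff(D_1,\partial D_1)$ with $\widetilde B(y_0)=\id$ and $\widetilde B(y)(q_i)=b_i(y)$ --- all obstructions vanish, as $H^{k}(D_2,\{y_0\};M)=0$ for every $k>0$ and every coefficient module $M$ --- and then $(x,y)\mapsto(\widetilde B(y)(x),y)$ is a fiber-preserving ambient homeomorphism carrying $Q_n\times D_2$ onto $S$, joined to $\id$ through such maps by precomposing $\widetilde B$ with $c_s$.
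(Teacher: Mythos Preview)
The paper does not actually prove this lemma; it is stated with a citation to Kamada's monograph and used as a black box throughout. Your argument is correct and essentially self-contained.

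For the ``only if'' direction, the Riemann--Hurwitz count is the cleanest route, and your parenthetical alternative (that a fiber-preserving ambient isotopy carries the branch set of $\pi_S$ to that of the image) is also valid.

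For the ``if'' direction, your approach is exactly the mechanism behind the monodromy classification of braided surfaces that the paper later quotes as Lemma~\ref{Lemma: Hurwitz equivalence}: with no branch points the monodromy $\rho_S\colon\pi_1(D_2,y_0)\to B_n$ is defined on the trivial group, so one must only check that two braided surfaces with the same (here trivial) monodromy are equivalent, which is precisely your lifting argument. The step you flag as the main obstacle---lifting $B\colon (D_2,y_0)\to(\mathcal F_n,(q_1,\dots,q_n))$ through the evaluation fibration $\Diff(D_1,\partial D_1)\to\mathcal F_n$ to a map with $\widetilde B(y_0)=\id$---is justified exactly as you say, using the homotopy lifting property relative to the cofibration $\{y_0\}\hookrightarrow D_2$. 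One small streamlining: once $\widetilde B$ is in hand, the single formula $h_s(x,y)=\bigl(\widetilde B(c_s(y))\circ\widetilde B(y)^{-1}(x),\,y\bigr)$ already gives the required fiber-preserving ambient isotopy of $D_1\times D_2$ with $h_0=\id$, $h_1(S)=Q_n\times D_2$, and $h_s|_{\mathrm{pr}_2^{-1}(y_0)}=\id$; the intermediate ``cover the isotopy of sections'' paragraph can be absorbed into this.
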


We assume $D_1\times D_2 \subset \R^2 \times \R^2 = \R^4$.
Let $S$ be a $2$-dimensional braid of degree $n$.
The \textit{closure} of $S$ is an orientable surface-link in $\R^4$ obtained from $S$ by attaching $n$ $2$-disks trivially outside $D_1 \times D_2$ in $\R^4$ along the boundary $\partial S$.
It is described in Figure~\ref{Figure: mp of the closure of 2-dim braid} when $n=3$, where $\varepsilon$ is a positive number and $S_{[t]}=S\cap D_1\times (I\times \{t\})$ $(t \in I)$.

\begin{proposition}[\cite{Kamada1994-01, Viro90}]\label{Proposition: Alexander theorem wrt closed 2-dim braids}
   Every orientable surface-link is equivalent to the closure of a $2$-dimensional braid.
\end{proposition}

\begin{figure}[h]
   \centering
   \includegraphics[width=\hsize]{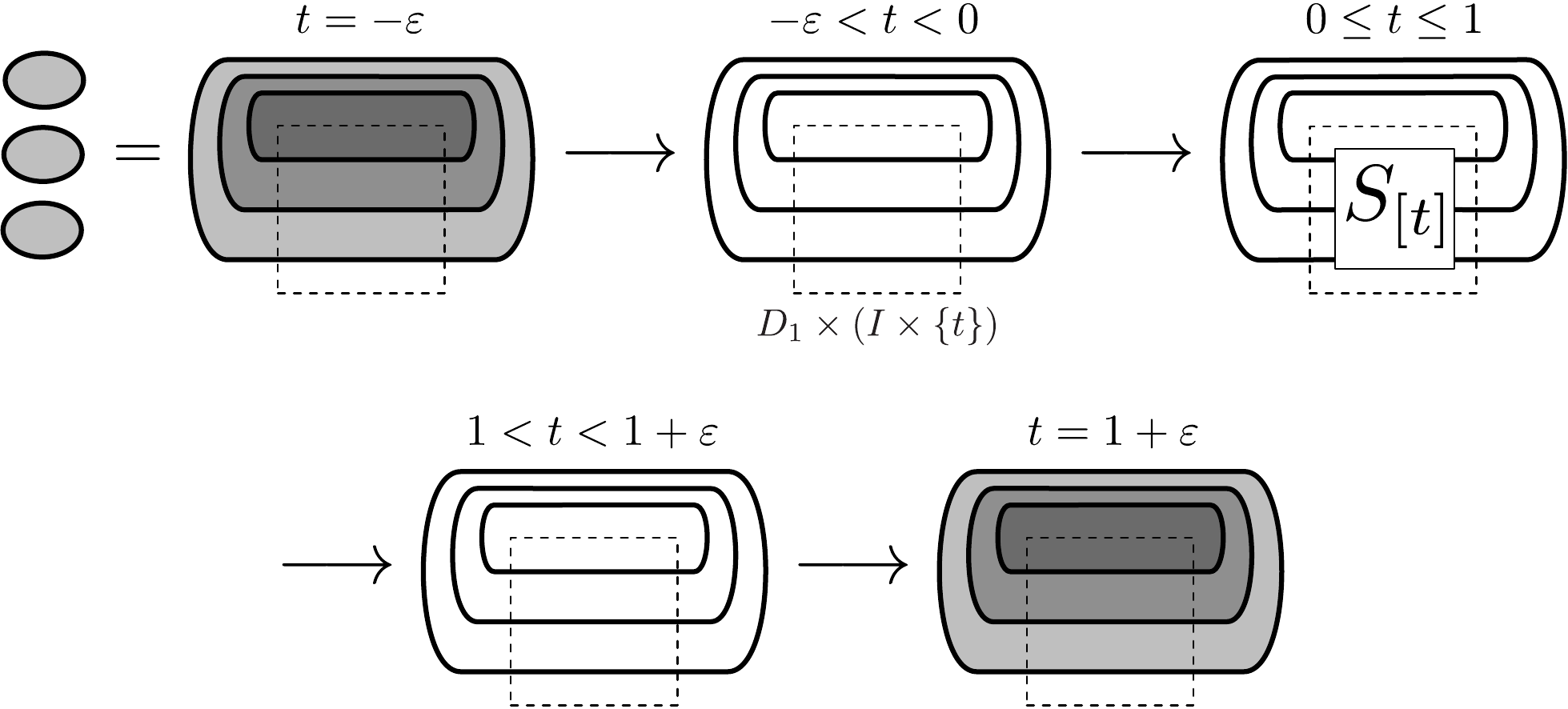}
   \caption{The closure $\overline{S}$ of a $2$-dimensional braid $S$.}
   \label{Figure: mp of the closure of 2-dim braid}
\end{figure}

For an orientable surface-link $F$, the \textit{braid index} of $F$,
denoted by $\mathrm{Braid}(F)$, is the minimum degree of $2$-dimensional braids whose closures are equivalent to $F$.

\section{A plat form presentation for a surface-link}\label{Section: A plat form for a surface-link}

In this section, we introduce a plat form for a surface-link.

We fix a loop $\mu: (I, \partial I) \to (\partial D_2, y_0)$ which runs once on $\partial D_2$ counter-clockwise.
For a braided surface $S$ of degree $n$, let
$g_S : (I, \partial I) \to (\mathcal{C}_{n}, Q_{n})$ be a loop in the configuration space $\mathcal{C}_n$ obtained by
\[
   g_S (t) ~=~ \mathrm{pr}_1(\pi_S^{-1}(\mu(t)))
\]
and $\beta_S$ an $n$-braid in $D_1\times I$ obtained by
\[
   \beta_S ~=~ \bigcup_{t\in I} \mathrm{pr}_1(\pi_S^{-1}(\mu(t)))\times \{t\},
\]
where $\pi_S: S \to D_2$ is the simple branched covering map appearing in the definition of a braided surface.
Then $\partial S$ is the closure of $\beta_S$ in $D_1\times \partial D_2$.

\begin{definition}\label{Def:adequate}
   A braided surface $S$ in $D_1 \times D_2$ is \textit{adequate} if $g_S$ is liftable or equivalently if $\beta_S$ is adequate.
\end{definition}

Note that the degree of an adequate braided surface is even.
For an adequate braided surface $S$ of degree $2m$, let $f_S: (I, \partial I) \to (\mathcal{W}_m, w_0)$ be the lift of $g_S$, i.e., a loop in $\mathcal{W}_m$ with $g_S = |\partial| \circ f_S$.

Let $N$ be a regular neighborhood of $\partial D_2$ in $\R^2\setminus \Int D_2$.
Since $N$ is homeomorphic to an annulus $I\times S^1$, we identify them by a fixed identification map $\phi: I \times S^1 \to N$ such that $\phi(0,p(t)) = \mu(t) \in \partial D_2$ for all $t \in I$, where $p: I \to S^1 = I/\partial I$ is the quotient map.

\begin{definition}
A properly embedded surface $A$ in $D_1\times N$ is \textit{of wicket type} if there exists
a loop $f: (I, \partial I) \to (\mathcal{W}_m, w_0)$ such that
 \[
          A ~=~ \bigcup_{t\in I} f(t)\times \{p(t)\} ~\subset~ (D_1 \times I) \times S^1 ~=~ D_1\times N.
   \]
In this case, we say that $A$ is \textit{associated with} $f$ and denote it by $A_f$.
\end{definition}

We remark that a surface $A$ of wicket type is a union of annuli or M\"{o}bius bands, and that $\partial A= \partial A_f$ is expressed as
\[
   \partial A ~=~ \bigcup_{t\in I} |\partial f(t)|\times \{p(t)\} ~\subset~ D_1 \times S^1 ~=~ D_1 \times \partial D^2.
\]
Since two loops $f$ and $f'$ in $(\mathcal{W}_m, w_0)$ with $|\partial| \circ f = |\partial| \circ f'$ are the same, we see that two surfaces $A$ and $A'$ of wicket type with $\partial A = \partial A'$ are the same.

Let $S$ be an adequate braided surface, and let $f: (I, \partial I) \to (\mathcal{W}_m, w_0)$ be a loop with $g_S = |\partial| \circ f$.
Then it holds that $S \cap A_f = \partial S = \partial A_f$.
We denote $A_f$ by $A_S$ and say that $A_S$ is the \textit{surface of wicket type associated with $S$}.

\begin{definition}
   Let $S$ be an adequate braided surface and $A_S$ the surface of wicket type associated with $S$.
   The \textit{plat closure of $S$}, denoted by $\widetilde{S}$, is the union of $S$ and $A_S$ in $\R^4$.
\end{definition}

When $\deg S = 2m$ and $S$ has $r$ branch points, the Euler characteristic $\chi(S)$ of $S$ is $2m-r$.
Since $\chi(A_S) = \chi(\partial A_S) = 0$, we have $\chi(\widetilde{S}) = 2m-r$.

\begin{definition}
A surface-link is said to be \textit{in a plat form} if it is the plat closure of an adequate braided surface.
Moreover, a surface-link is said to be \textit{in a genuine plat form} if it is that of a $2$-dimensional braid.
\end{definition}

We introduce a \textit{normal plat form} for a surface-link by using a motion picture as follows:
Let $\widetilde{S}$ be the plat closure of an adequate braided surface $S$ of degree $2m$, and set $\widetilde{S}_{[t]} = \widetilde{S} \cap \R^3\times \{t\}$ ($t\in \R$) and $S_{[t]} = S \cap D_1\times (I\times \{t\}) = S \cap \R^3\times \{t\}$ ($t \in [0,1]$).
Replacing $S$ with an equivalent braided surface if necessary, we may assume that $S$ satisfies the following conditions for some $t_0 \in [0,1]$:
\begin{enumerate}
   \item $S$ has no branch points over $I\times [t_0,1] \subset D_2$.
   \item $\mathrm{pr}_1(\pi_S^{-1}(y)) = Q_{2m}$ for every $y \in \partial D_2 \setminus \left(\{1\} \times [t_0, 1]\right)$.
   \item $S_{[t_0]} = \beta_S \times \{t_0\}$.
\end{enumerate}
In particular, $S_{[0]}$ and $S_{[1]}$ are both the trivial braids.
Furthermore, replacing $S$ with an equivalent braided surface if necessary, we may assume that the motion picture $\{\widetilde{S}_{[t]}\}_{t\in [t_0, 1]}$ between $t = t_0$ and $t = 1$ is the isotopic deformation changing $\widetilde{\beta_f}$ to the plat closure of the trivial braid.
(See Figure~\ref{Figure: mp of braid equipped with wickets 2}.)
Finally, deforming $A_S$ by an ambient isotopy rel boundary, we have a surface-link $F$, equivalent to $\widetilde{S}$, described by a motion picture as in Figure~\ref{Figure: mp of plat cl for braided surface}.
The surface-link $F$ in this form is said to be in a \textit{normal plat form}.

\begin{figure}[h]
   \centering
   \includegraphics[width = 0.9\hsize]{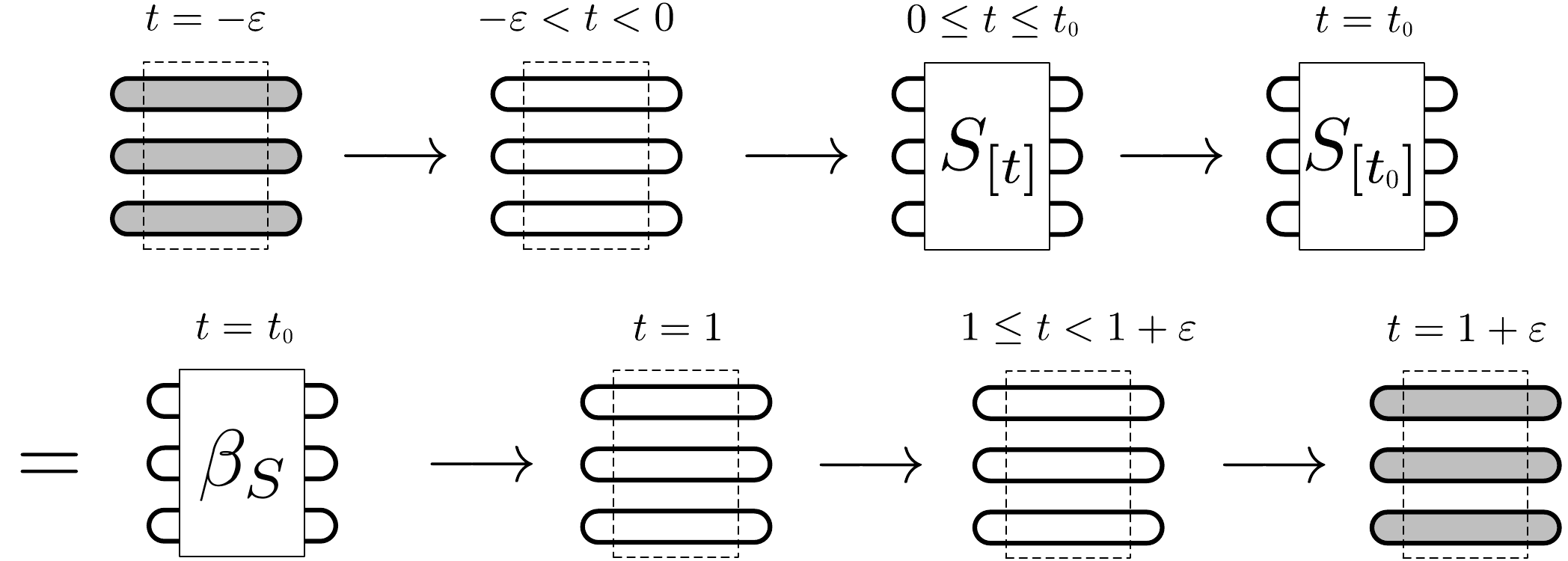}
   \caption{A surface-link in a normal plat form.}
   \label{Figure: mp of plat cl for braided surface}
\end{figure}


\section{Proofs of Theorems \ref{Main theorem-A The existence of a plat form} and \ref{Main Theorem-B The existence of a genuine plat form}}\label{Section: existence of plat forms}
In this section, we give proofs of Theorems \ref{Main theorem-A The existence of a plat form} and \ref{Main Theorem-B The existence of a genuine plat form}.
To prove them, we discuss a plat form for a link and a banded link presentation for a surface-link.

\subsection{Stabilization and generalized stabilization for braids}

For positive integers $n$ and $n'$ with $n \leq  n'$,
let $\iota_n^{n'}: B_n \to B_{n'}$ denote the natural inclusion map from $B_n$ to $B_{n'}$ sending each generator $\sigma_i\in B_n$ to $\sigma_i\in B_{n'}$.

A \textit{stabilization} of a $2m$-braid $\beta$ is a replacement of $\beta$ with a $2m'$-braid $\beta'$
such that
\[
   \beta' ~=~ \iota_{2m}^{2m'}(\beta) \, \sigma_{2m}\,\sigma_{2(m+1)}\,\sigma_{2(m+2)}  \ldots\,\sigma_{2(m'-1)},
\]
where $m'$ is an integer with $m \leq m'$.
We also call a stabilization an \textit{$l$-stabilization} when $l = m'-m$.

It is obvious that if $\beta'$ is obtained from $\beta$ by an $l$-stabilization then the plat closure of $\beta'$ is equivalent to that of $\beta$ as links in $\R^3$.
See Figure~\ref{Figure: Stabilization of a braid} for $l=1, 2$.

\begin{figure}[h]
   \centering
   \includegraphics[height=25mm]{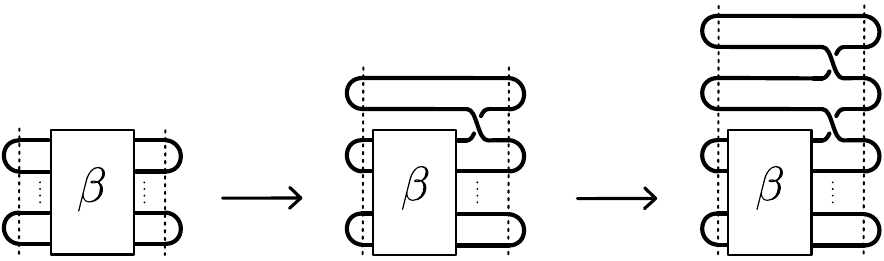}
   \caption{Plat closures of stabilized braids.}
   \label{Figure: Stabilization of a braid}
\end{figure}

\begin{proposition}[\cite{Birman1976}]\label{Proposition: Birman's criterion for knots in plat forms}
   Let $\beta_i$ ($i=1,2$) be a $2m_i$-braid such that the plat closure $\widetilde{\beta_i}$ is a knot.
   Then $\widetilde{\beta_1}$ is equivalent as knots in $\R^3$ to $\widetilde{\beta_2}$ if and only if there exists an integer $t \geq \max\{m_1,m_2\}$ such that for each $m \geq t$, the $2m$-braids $\beta_i'$ ($i=1,2$) obtained from $\beta_i$ by stabilization belong to the same double coset of $B_{2m}$ modulo $K_{2m}$.
\end{proposition}

Proposition~\ref{Proposition: Birman's criterion for knots in plat forms} is generalized into the case of links in $\R^3$ by using the notion of a generalized stabilization.

Let $\Lambda_m$ be the set of $m$-tuples of non-negative integers.
For two elements $\lambda=(l_1, \ldots, l_m)$ and $\lambda'=(l'_1, \ldots, l'_m)$ of $\Lambda_m$, we write $\lambda \preceq \lambda'$ if $l_i \leq l_i'$ for each $i = 1, \ldots, m$.
Then $\preceq$ is a (directed) partial ordering on $\Lambda_m$.
Put $|\lambda|_0 = m$, $|\lambda|_i =  m + l_1 + \cdots + l_i$ ($i=1,\ldots,m$), and $|\lambda| = |\lambda|_m$.
For a given $\lambda \in \Lambda_m$, we denote $\tau_i = \sigma_{2i} \sigma_{2i-1} \sigma_{2i+1}\sigma_{2i} \in K_{2|\lambda|}$ ($1\leq i \leq |\lambda|-1$) and
\[
   T_{i,\, j} = \prod_{k=i}^{m-1} \tau_{k} \cdot \prod_{k=m}^{j} \tau_{k}^{-1} \in K_{2|\lambda|} \quad (1 \leq i\leq m, m-1 \leq j \leq |\lambda|),
\]
where the former or later product is assumed to be the identity element of the group if $i=m$ or $j=m-1$, respectively, 
and we construct a $2\abs{\lambda}$-braid $T(\lambda)$ as follows:
\[
   T(\lambda) ~=~ \prod_{i=1}^{m} T_{i,\, (|\lambda|_{(i-1)}-1)}\,
   \sigma_{2|\lambda|_{i-1}}\sigma_{2(|\lambda|_{i-1}+1)}\cdots \sigma_{2|\lambda|_{i}}\,
   T_{i,\, (|\lambda|_{(i-1)}-1)}^{-1}.
\]


For a $2m$-braid $\beta$ and $\lambda \in \Lambda_{m}$, we let $\beta^\lambda$ denote a $2\abs{\lambda}$-braid such that
$$\beta^\lambda = \iota_{2m}^{2\abs{\lambda}}(\beta)\cdot T(\lambda).$$

A \textit{generalized stabilization} (with respect to $\lambda$) or \textit{$\lambda$-stabilization} of $\beta$ is a replacement of $\beta$ with $\beta^\lambda$.
A $\lambda$-stabilization is a composition of $l_i$-stabilization performed on the $2i$-th strand of $\beta$ for each $i =1, \ldots, m$.
A $l$-stabilization of $\beta$ is a $\lambda$-stabilization with $\lambda = (0, \ldots, 0,l) \in \Lambda_{m}$.
Figure~\ref{Figure: an example of generalized stabilization} depicts the plat closure of a $12$-braid obtained from a $6$-braid $\beta$ by a generalized stabilization with respect to $\lambda = (2, 0, 1) \in \Lambda_3$.

\begin{figure}[h]
   \centering
   \includegraphics[width = 0.7\hsize]{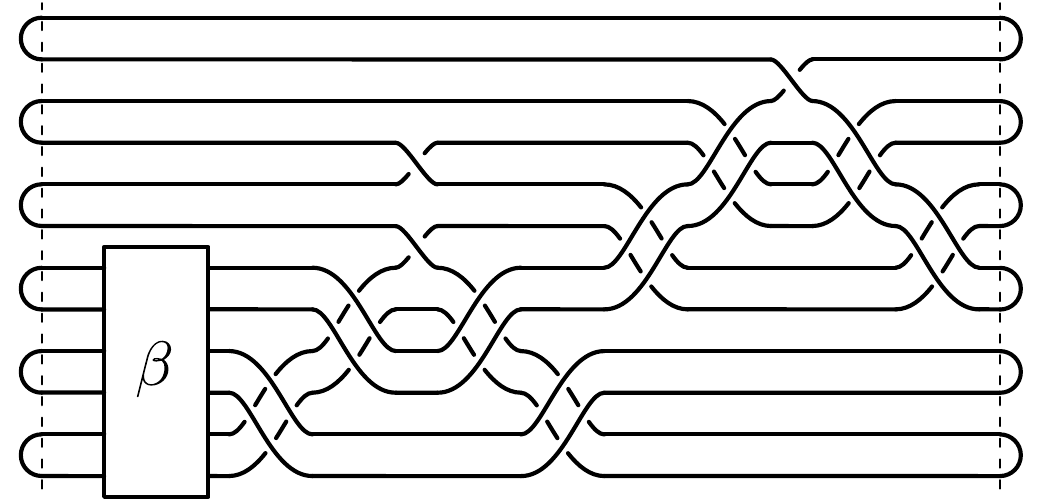}
   \caption{The plat closure of a $(2,0,1)$-stabilized braid.}
   \label{Figure: an example of generalized stabilization}
\end{figure}

The following proposition states that two braids of even degrees have equivalent plat closures as links in $\R^3$ if and only if, after applying a generalized stabilization suitably,
they belong to the same double coset of $B_{2m}$ modulo $K_{2m}$.

\begin{proposition}[cf. \cite{Birman1976}]\label{Proposition: Birman's criterion for links in plat forms}
   Let $\beta_i$ ($i=1,2$) be a $2m_i$-braid.
   The plat closure $\widetilde{\beta_1}$ is equivalent to $\widetilde{\beta_2}$ as links in $\R^3$ if and only if there exists an element $\lambda \in \Lambda_{m_1}$ satisfying the following condition:
   For any $\lambda_1 \succeq \lambda$, there exists $\lambda_2 \in \Lambda_{m_2}$ with $|\lambda_1| = |\lambda_2|$ such that $\beta_1^{\lambda_1}$ and $\beta_2^{\lambda_2}$
    belong to the same double coset of $B_{2m}$ modulo $K_{2m}$, where $m = |\lambda_1| = |\lambda_2|$.
\end{proposition}

Proposition~\ref{Proposition: Birman's criterion for links in plat forms} is proved directly by applying the proof of Proposition~\ref{Proposition: Birman's criterion for knots in plat forms} given in \cite{Birman1976} for each component of a link.

\subsection{A banded link presentation for a surface-link}
A \textit{banded link} in $\R^3$ means a pair $(L, B)$ of a link $L$ and a family $B$ of mutually disjoint bands attaching to $L$.
We let $L_B$ denote the link obtained from $L$ by surgery along the bands belonging to $B$.
A banded link $(L, B)$ is \textit{admissible} if both $L$ and $L_B$ are trivial links.

Let $(L, B)$ be an admissible banded link in $\R^3$. Let $\mathbf{d}$ and $\mathbf{D}$ be unions of mutually disjoint 2-disks embedded in $\R^3$ bounded by $L$ and $L_B$, respectively.
Consider a closed surface $F = F(L, B)$ in $\R^4 = \R^3 \times \R$ defined by
\begin{align*}\label{Equation motion picture}
     p(F\cap \R^3\times\{t\}) =
   \begin{cases}
    \mathbf{D}  & (t=1), \\
    L_B & (0<t<1), \\
    L \cup |B|   & (t=0), \\
    L  & (-1<t<0), \\
    \mathbf{d} & (t=-1),\mathrm{and}\\
    \emptyset  & \textrm{otherwise},
   \end{cases}
\end{align*}
where $|B|$ is the union of the bands belonging to $B$.
We call $F(L, B)$ a \textit{closed realizing surface of $(L, B)$}.
Although it depends on a choice of $\mathbf{d}$ and $\mathbf{D}$, the equivalence class as surface-links does not depend on them (cf. \cite{Kamada2017_book, K-S-S1982}).

Let $r$ be a real number, and let $h: \R^3\times(-\infty,r] \to (-\infty,r]$ be the projection onto the second factor, which we regard as a height function of $\R^3\times(-\infty,r]$.

\begin{lemma}[cf. \cite{Kamada2017_book, K-S-S1982}]\label{Lemma: 2017_book-thm3.3.2}
   Let $F$ and $F'$ be compact surfaces properly embedded in $\R^3\times (-\infty,r]$ such that all critical points of $F$ and $F'$ are minimal points with respect to $h$, and their boundaries are the same trivial link in $\R^3\times \{r\}$.
   Then, $F$ and $F'$ are ambient isotopic in $\R^3\times (-\infty,r]$ rel $\R^3\times \{r\}$.
\end{lemma}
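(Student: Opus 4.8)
The plan is to read off the shape of $F$ and $F'$ from Morse theory applied to $h$, and then to appeal to the classical uniqueness statement for surfaces that have only minimal critical points.

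\emph{Reducing to systems of disks.} Since every critical point of $h|_F$ is a local minimum, as the level rises past a critical value a new circle is born (a $0$-handle is attached) and nothing else happens, while between consecutive critical values $h|_F$ is a submersion, so there $F$ is a product. Consequently $F$ is a disjoint union of $2$-disks, each carrying exactly one minimum, and the number of these disks equals the number $k$ of components of the trivial link $L=\partial F$. The same applies to $F'$. Writing $L=\ell_1\cup\cdots\cup\ell_k$, we obtain $F=D_1\sqcup\cdots\sqcup D_k$ and $F'=D_1'\sqcup\cdots\sqcup D_k'$ with $\partial D_i=\partial D_i'=\ell_i$ and $h$ restricting to a single minimum on each disk. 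Thus the lemma reduces to the following: \emph{any two systems of mutually disjoint single-minimum disks with the same trivial boundary link are ambiently isotopic rel $\R^3\times\{r\}$.}

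\emph{The crux.} This last statement is the technical heart, and I expect it to be the main obstacle, since it is exactly where knottedness could enter and where the single-minimum hypothesis must be used essentially; it is also the fact underlying the well-definedness of the realizing surface $F(L,B)$ discussed in Section~\ref{Section: existence plat forms}, and it is contained in \cite{K-S-S1982, Kamada2017_book}. I would either invoke it directly or reproduce the argument, which runs as follows. First, using the vertical foliation of each $D_i$ away from its minimum together with the isotopy extension theorem, one may assume each $D_i$ is, above a level just over its minimum, the trace of an isotopy of $\ell_i$ shrinking it to a tiny round circle, capped below by a tiny standard cup. Next, doubling across $\R^3\times\{r\}$ (reflecting $F'$) and matching components with $F$ produces a disjoint union of $2$-spheres $\Sigma_i$, each with exactly one maximum and one minimum; such a $2$-sphere in $\R^4$ is unknotted, and one shows that the bounding $3$-balls may be chosen disjoint and to meet $\R^3\times\{r\}$ each in a single disk spanning $\ell_i$. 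These $3$-balls then provide the required ambient isotopy, carried out rel $\R^3\times\{r\}$, between $F$ and $F'$, and in fact exhibit both as the push-down of a system of disjoint spanning disks of $L$ in $\R^3\times\{r\}$.

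The only points needing care beyond this outline are the book-keeping required to keep every isotopy fixed on $\R^3\times\{r\}$ throughout, the passage from one component to several (handled by applying the one-component case in disjoint regular neighborhoods after a general-position adjustment, which does not create new critical points of $h$), and the input that a $2$-sphere in $\R^4$ with a single maximum and a single minimum, hence no saddles, is unknotted — itself a standard lemma from \cite{K-S-S1982}. Everything else is elementary Morse theory and general position.
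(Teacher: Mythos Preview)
The paper does not give its own proof of this lemma: it is stated with a ``cf.'' to \cite{Kamada2017_book, K-S-S1982} and invoked as a black box in the proof of Theorem~\ref{Main theorem-A The existence of a plat form}. Your outline is a faithful sketch of the standard argument from those references (Morse-theoretic reduction to a system of single-minimum disks, then the Horibe--Yanagawa/K-S-S uniqueness via doubling to $2$-spheres with one maximum and one minimum), so there is nothing to compare and nothing to correct.
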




\begin{lemma}[\cite{K-S-S1982}]\label{Lem:realizing_surface_1}
   If two admissible banded links $(L, B)$ and $(L', B')$ are ambient isotopic in $\R^3$, then their closed realizing surfaces $F(L, B)$ and $F(L', B')$ are equivalent.
\end{lemma}

\begin{lemma}[\cite{K-S-S1982}]\label{Lem:realizing_surface_2}
   Any surface-link $F$ is equivalent to a closed realizing surface $F(L, B)$ of an admissible banded link $(L, B)$.
\end{lemma}

\begin{lemma}\label{Lemma: Deformation to normal banded braid form}
By an isotopy of $\R^3$, any banded link $(L,B)$ in $\R^3$ is deformed to a banded link $(L_0, B_0)$ satisfying the following conditions:
   \begin{enumerate}
      \item There exists a disk $D$ in $\R^2$ and a $2m_0$-braid $\beta_0$ in $D\times I$ ($\subset \R^2 \times \R = \R^3$) for some $m_0\in \N$ such that $\beta_0 = L_0\cap D\times I$ and $\widetilde{\beta_0} = L_0$.
      \item There exist mutually disjoint $n$ subcylinders $U_i = d_i \times [s_i, t_i]$ ($i = 1, \ldots, n$) in $D \times I$ such that each $U_i$ contains a part of $L_0$ as a pair of vertical line segments and a half-twisted band $b_i \in B_0$ as in Figure~\ref{Figure: Thm1-7 band in cylinder}, where $n$ is the number of bands belonging to $B_0$.
   \end{enumerate}

   Furthermore, we may take subcylinders $U_i = d_i \times [s_i, t_i]$ such that $d_1, \dots, d_n$ are mutually disjoint disks in $\Int D$ and $[s_i, t_i] = [2/5, 3/5]$ for $i = 1, \dots, n$.
\end{lemma}

\begin{figure}[h]
   \centering
   \includegraphics[height = 27mm]{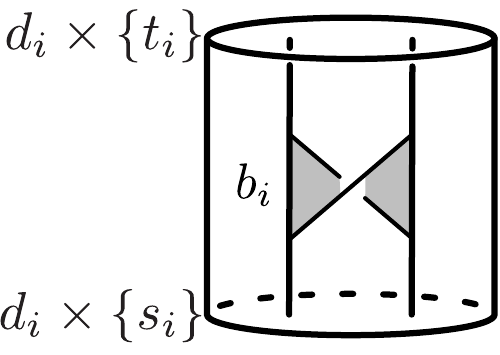}
   \caption{A local model of $L_0$ and $b_i$ in $U_i = d_i \times [s_i, t_i]$.}
   \label{Figure: Thm1-7 band in cylinder}
\end{figure}

\begin{proof}
   Let $d_1, \dots, d_n$ be mutually disjoint disks in $\Int D$ and let $U_i = d_i \times [2/5, 3/5]$ for $i = 1, \dots, n$.
   By an isotopy of $\R^3$, $(L, B)$ is deformed into $(L_1, B_0)$ such that for each $i$, $U_i$ intersects with $(L_1, B_0)$ as in Figure~\ref{Figure: Thm1-7 band in cylinder}.

   By an isotopy of $\R^3$ keeping $U_i$ ($i = 1, \dots, n$) fixed pointwise, $(L_1, B_0)$ is deformed into $(L_2, B_0)$ such that all maximal points of $L_2$ are in $\R^2 \times \{1\}$ and all minimal points of $L_2$ are in $\R^2 \times \{0\}$.
   Finally, by an isotopy of $\R^3$ keeping $U_i$ ($i = 1, \dots, n$) fixed pointwise, we deform the link $L_2$ into a link $L_0$ satisfying the condition (1).
\end{proof}

We denote by $(\beta_0)_{B_0}$ the $2m_0$-braid in $D\times I$ obtained from $\beta_0$ by surgery along bands belonging to $B_0$.

\begin{proof}[Proof of Theorem~\ref{Main theorem-A The existence of a plat form}]
We prove the theorem by $3$ steps.
Let $F$ be a surface-link.

\textbf{Step $1$}:
By Lemmas~\ref{Lem:realizing_surface_1},~\ref{Lem:realizing_surface_2} and~\ref{Lemma: Deformation to normal banded braid form}, $F$ is equivalent to a closed realizing surface of a banded link $(L_0, B_0)$ satisfying the conditions (1) and (2) in Lemma~\ref{Lemma: Deformation to normal banded braid form}.
Let $\beta_0$ be the $2 m_0$-braid in $D\times I$ as in Lemma~\ref{Lemma: Deformation to normal banded braid form}.

Let $c_1$ and $c_2$ be the numbers of components of $L_0$ and $(L_0)_{B_0}$, respectively.
Since $\widetilde{\beta_0} = L_0$ is a trivial link of $c_1$ components, the plat closure $\widetilde{\beta_0}$ is equivalent as links in $\R^3$ to the plat closure $\widetilde{1_{2c_1}}$ of the trivial braid $1_{2c_1} \in B_{2 c_1}$.
The plat closure $\widetilde{(\beta_0)_{B_0}}$ is equivalent to the plat closure $\widetilde{1_{2c_2}}$ of the trivial braid $1_{2c_2} \in B_{2 c_2}$ by the same reason.

Applying Proposition~\ref{Proposition: Birman's criterion for links in plat forms} to the two pairs $(\beta_0, 1_{2c_1})$ and $((\beta_0)_{B_0}, 1_{2c_2})$ of braids in $D\times I$, there exist a positive integer $m \in \Z$, three elements $\lambda\in \Lambda_{m_0}$, $\lambda_1\in \Lambda_{c_1}$, $\lambda_2\in \Lambda_{c_2}$, and four adequate $2m$-braids $\gamma$, $\gamma'$, $\delta$, $\delta'$ in $D\times I$ such that $|\lambda| = |\lambda_1| = |\lambda_2| = m$ and
\[
   \beta_1 ~=~ \gamma\,\alpha_1\,\gamma', \quad
   \beta_2 = \delta\,\alpha_2\,\delta' \quad \mathrm{in} \quad B_{2m},
\]
where $\beta_1 = \beta_0^\lambda$, $\alpha_1 = 1_{2c_1}^{\lambda_1}$, $\beta_2 = (\beta_0)_{B_0}^{\, \lambda}$ and $\alpha_2 = 1_{2c_2}^{\lambda_2}$ are $2m$-braids in $D\times I$ obtained
by generalized stabilization.


Since $\beta_1$ is a $\lambda$-stabilized $\beta_0$, there exists a subcylinder $U$ of $D\times I$ such that $\beta_1 \cap U = \beta_0$ under an identification of $U$ and $D\times I$.
Let $B_1$ be the set of bands attaching to $\beta_1$ obtained from $B_0$ via the identification.
Then, $\beta_2$ and $(\beta_1)_{B_1}$ are the same braid.
Note that $(\widetilde{\beta_1}, B_1)$ is ambient isotopic to $(L_0, B_0)$.

\textbf{Step $2$}: We construct a properly embedded compact surface $S_0$ in $D_1 \times D_2$ and a braided surface $S$ of degree $2m$ in $D_1\times D_2$.
Let $0=t_0 < t_1 < \dots < t_6 < t_7 = 1$ be a partition of $I=[0,1]$.
We divide $D_2=I\times I$ into seven pieces $E_0, \dots, E_6$ with $E_i = I\times [t_i,t_{i+1}]$.
Let $\alpha_1^*$ and $\alpha_2^*$ be $2m$-braids in $D_1 \times I$ given by
\[
   \alpha_1^* ~=~ \prod_{i=1}^{m} T_{i,\, (|\lambda_1|_{(i-1)}-1)} T_{i,\, (|\lambda_1|_{(i-1)}-1)}^{-1}, \quad
   \alpha_2^* ~=~ \prod_{i=1}^{m} T_{i,\, (|\lambda_2|_{(i-1)}-1)} T_{i,\, (|\lambda_2|_{(i-1)}-1)}^{-1},
\]
which are obtained from $\alpha_1 = 1_{2c_1}^{\lambda_1} = T(\lambda_1)$ and $\alpha_2 = 1_{2c_2}^{\lambda_2} = T(\lambda_2)$ by removing the parts $\sigma_{2|\lambda_1|_{(i-1)}}\sigma_{2(|\lambda_1|_{(i-1)} + 1)} \dots \sigma_{2|\lambda_1|_{i}}$ and $\sigma_{2|\lambda_2|_{(i-1)}}\sigma_{2(|\lambda_2|_{(i-1)} + 1)} \dots \sigma_{2|\lambda_2|_{i}}$ ($i = 1, \dots, m$), respectively (Figure~\ref{Figure: Thm1-9 motion picture oover E1}).
Note that $\alpha_1^*$ and $\alpha_2^*$ are equivalent to the trivial braid $1_{2m} = Q_{2m} \times I$ as braids in $D_1 \times I$.

\begin{figure}[h]
    \centering
    \includegraphics[width = 0.9\hsize]{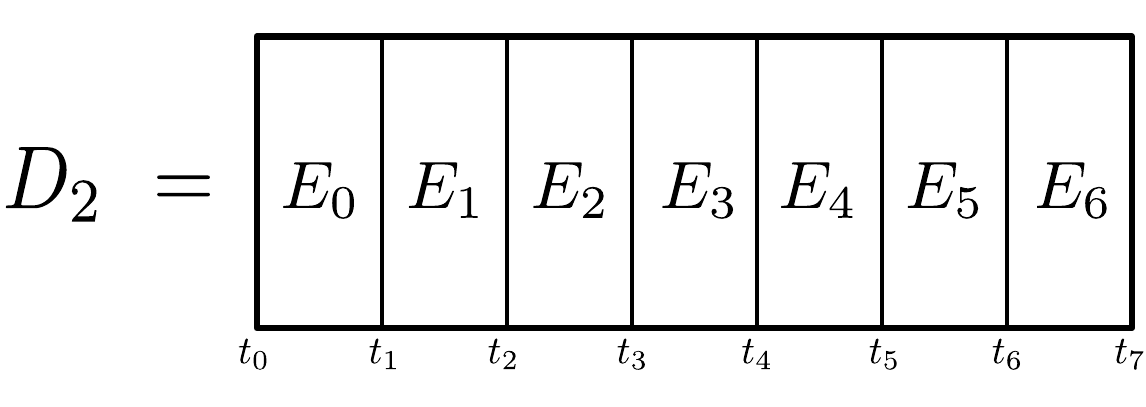}
    \caption{The partition of $D_2$.}
    \label{Figure: Thm1-5 Partition of D2}
\end{figure}

Let $\mathrm{p}_1: D_1\times I \to D_1$ and $\mathrm{p}_2: D_1\times I \to I$ be the projections onto the first and second factors, respectively.
Let $\mathrm{pr}_i: D_1\times D_2 \to D_i$ be the projections onto the $i$-th factors ($i = 1, 2$).
For a braid $b$ in $D_1 \times I$ and $s \in I$, we denote by $b_{[s]}$ the image $\mathrm{p}_1(b\cap \mathrm{p}_2^{-1}(s))$ in $D_1$ of the intersection $b \cap p_2^{-1}(s)$.

Now, we define a properly embedded compact surface $S_0$ in $D_1\times D_2$, step by step, as follows:
\begin{enumerate}
   \setcounter{enumi}{-1}
   \setlength{\leftskip}{-6mm}
   \item First, we define $S_0 \cap D_1\times \partial E_0$ by
   \begin{align*}
      \mathrm{pr_1}(S_0\cap\mathrm{pr}_2^{-1}(s,t)) ~=~
      \begin{cases}
         (\alpha_1^*)_{[s]}   \quad &((s,t)\in I\times \{t_1\}),\\
         Q_{2m}               \quad &((s,t)\in \{0,1\}\times [t_0,t_1]),\\
         Q_{2m}               \quad &((s,t)\in I\times \{t_0\}).
      \end{cases}
   \end{align*}
   See Figure~\ref{Figure: Thm1-6 Putting up braids on D2}.
   Since $\alpha_1^*$ is equivalent to the trivial $2m$-braid, we may define $S_0 \cap D_1\times E_0$ as a braided surface of degree $2m$ without branch points in $D_1 \times E_0$, which is trivial by Lemma~\ref{Lemma: Triviality of braided surface}.

   \begin{figure}[h]
      \centering
      \includegraphics[width = 0.9\hsize]{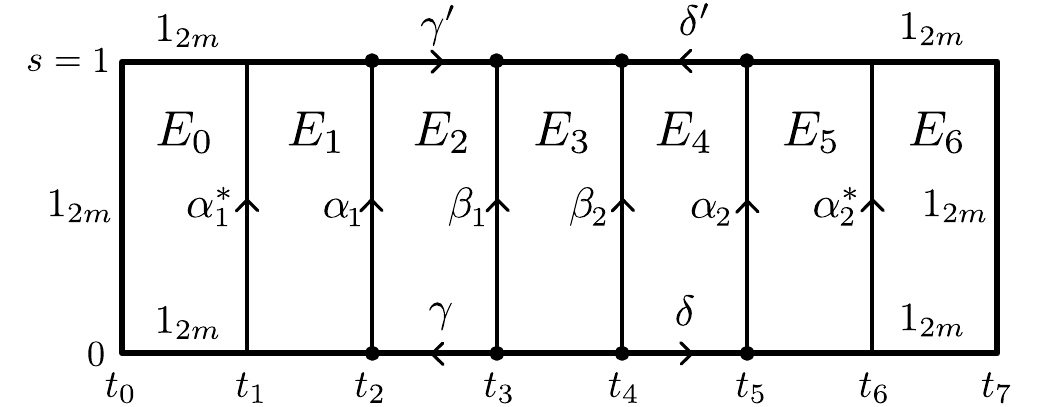}
      \caption{A blueprint for a surface $S_0$. Each braid is appeared as the section of $S_0$.}
      \label{Figure: Thm1-6 Putting up braids on D2}
   \end{figure}

   \item We define $S_0 \cap D_1\times (E_1 \setminus I\times \{(t_1+t_2)/2\})$ as follows:
   \begin{align*}
      \mathrm{pr_1}(S_0 \cap\mathrm{pr}_2^{-1}(s,t)) ~=~
      \begin{cases}
         (\alpha_1^*)_{[s]}  \quad &((s,t)\in I\times [t_1,(t_1+t_2)/2)),\\
         (\alpha_1)_{[s]}    \quad &((s,t)\in I\times ((t_1+t_2)/2,t_2]).
      \end{cases}
   \end{align*}
   Then, we define $S_0 \cap D_1 \times (I\times \{(t_1+t_2)/2\})$ as the $2m$-braid $\alpha_1^*$ with bands such that the surgery result of $\alpha_1^*$ is $\alpha_1$ (see Figure~\ref{Figure: Thm1-9 motion picture oover E1}).
   We denote by $B_1^-$ the set of these bands.

   \begin{figure}[h]
      \centering
      \includegraphics[width = \hsize]{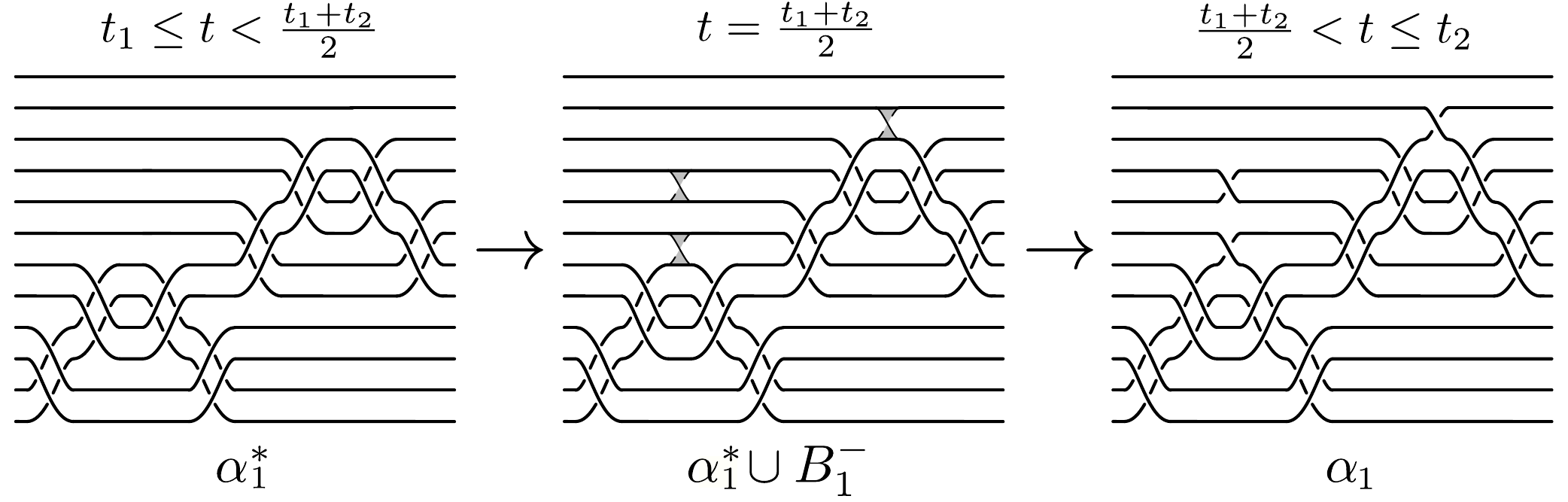}
      \caption{A motion picture of $S_0$ ($t_1\leq t \leq t_2$).}
      \label{Figure: Thm1-9 motion picture oover E1}
   \end{figure}

   \item We construct $S_0\cap D_1\times E_2$ similarly to the case (0).
   First, we define $S_0 \cap D_1 \times \partial E_2$ by
   \begin{align*}
      \mathrm{pr_1}(S_0\cap\mathrm{pr}_2^{-1}(s,t)) ~=~
      \begin{cases}
         (\beta_1)_{[s]}              \quad &((s,t)\in I\times \{t_3\}),\\
         \gamma'_{[(t-t_2)/(t_3-t_2)]} \quad &((s,t)\in \{1\}\times [t_2,t_3]),\\
         (\alpha_1)_{[s]}             \quad &((s,t)\in I\times \{t_2\}),\\
         \gamma_{[(t-t_3)/(t_2-t_3)]}\quad &((s,t)\in \{0\}\times [t_2,t_3]).
      \end{cases}
   \end{align*}
   Since $\beta_1 = \gamma\,\alpha_1\,\gamma'$, the closed braid $S_0 \cap D_1 \times \partial E_2$ is equivalent to the trivial closed braid in $D_1 \times \partial E_2$.
   Thus we may define $S_0 \cap D_1 \times E_2$ as a braided surface of degree $2m$ without branch points.

   \item We construct $S_0 \cap D_1 \times E_3$ similarly to the case (1).
   First, we define $S_0 \cap D_1\times (E_3\setminus I\times \{(t_3+t_4)/2\})$ by
   \begin{align*}
      \mathrm{pr_1}(S_0 \cap\mathrm{pr}_2^{-1}(s,t)) ~=~
      \begin{cases}
         (\beta_1)_{[s]}        \quad &((s,t)\in I\times [t_3,(t_3+t_4)/2)),\\
         (\beta_2)_{[s]}    \quad &((s,t)\in I\times ((t_3+t_4)/2,t_4]).
      \end{cases}
   \end{align*}
   Then, we define $S_0 \cap D_1\times (I\times \{(t_3+t_4)/2\})$ as the $2m$-braid $\beta_1$ with bands belonging to $B_1$.

   \item We construct $S_0\cap D_1 \times E_4$ similarly to the case (2).
   We define $S_0\cap D_1\times \partial E_4$ by
   \begin{align*}
      \mathrm{pr_1}(S_0\cap\mathrm{pr}_2^{-1}(s,t)) ~=~
      \begin{cases}
         (\beta_2)_{[s]}              \quad &((s,t)\in I\times \{t_4\}),\\
         \delta'_{[(t-t_5)/(t_4-t_5)]} \quad &((s,t)\in \{1\}\times [t_4,t_5]),\\
         (\alpha_2)_{[s]}             \quad &((s,t)\in I\times \{t_5\}),\\
         \delta_{[(t-t_4)/(t_4-t_5)]}\quad &((s,t)\in \{0\}\times [t_4,t_5]).
      \end{cases}
   \end{align*}
   Since $\beta_2 = \delta\,\alpha_2\,\delta'$, we define $S_0 \cap D_1\times E_4$ as a braided surface of degree $2m$ without branch points.

   \item We construct $S_0 \cap D_1 \times E_5$ similarly to the case (1).
   We define $S_0 \cap D_1 \times (E_5\setminus \{(t_5 + t_6)/2\})$ by
   \begin{align*}
      \mathrm{pr_1}(S_0 \cap\mathrm{pr}_2^{-1}(s,t)) ~=~
      \begin{cases}
         (\alpha_2)_{[s]}   \quad &((s,t)\in I\times [t_5,(t_5+t_6)/2)),\\
         (\alpha_2^*)_{[s]} \quad &((s,t)\in I\times ((t_5+t_6)/2,t_6]).
      \end{cases}
   \end{align*}
   Then, we define $S_0 \cap D_1\times I\times \{(t_5+t_6)/2\}$ as the $2m$-braid $\alpha_2^*$ with bands attaching to $\alpha_2^*$ as in the opposite direction of Figure~\ref{Figure: Thm1-9 motion picture oover E1} such that the surgery result of $\alpha_2^*$ is $\alpha_2$.
   We denote by $B_1^+$ the set of these bands.

   \item We construct $S_0\cap D_1 \times E_6$ similarly to the case (0).
   First, we define $S_0 \cap D_1 \times \partial E_6$ by
   \begin{align*}
      \mathrm{pr_1}(S_0\cap\mathrm{pr}_2^{-1}(s,t)) ~=~
      \begin{cases}
         (\alpha_2^*)_{[s]}  \quad &((s,t)\in I\times \{t_7\}),\\
         Q_{2m}            \quad &((s,t)\in \{0,1\}\times [t_6,t_7]),\\
         Q_{2m}            \quad &((s,t)\in I\times \{t_6\}).
      \end{cases}
   \end{align*}
   Since $\alpha_2^*$ is equivalent to the trivial $2m$-braid, we may define $S_0 \cap D_1 \times E_6$ as a braided surface of degree $2m$ without branch points.
\end{enumerate}

As a result, we have a properly embedded surface $S_0$ in $D_1\times D_2$.
We take a based point $y_0 = (0,0) \in \partial D_2$.
Then, $S_0$ is a braided surface of degree $2m$ except in neighborhoods of the bands appearing in (1), (3), and (5).
By an ambient isotopy of a neighborhood of each band, we can change the band to a branch point as shown in Figure~\ref{Figure: MPs of band and branch}.
Hence, we obtain a braided surface $S$ of degree $2m$ from $S_0$.
The braided surface $S$ is adequate because the $2m$-braid $\beta_S$ is the composition of adequate $2m$-braids
$\gamma^{-1}$, $\delta$, $\delta'$, and $\gamma'^{-1}$.

\begin{figure}[h]
   \centering
   \includegraphics[width=0.8\hsize]{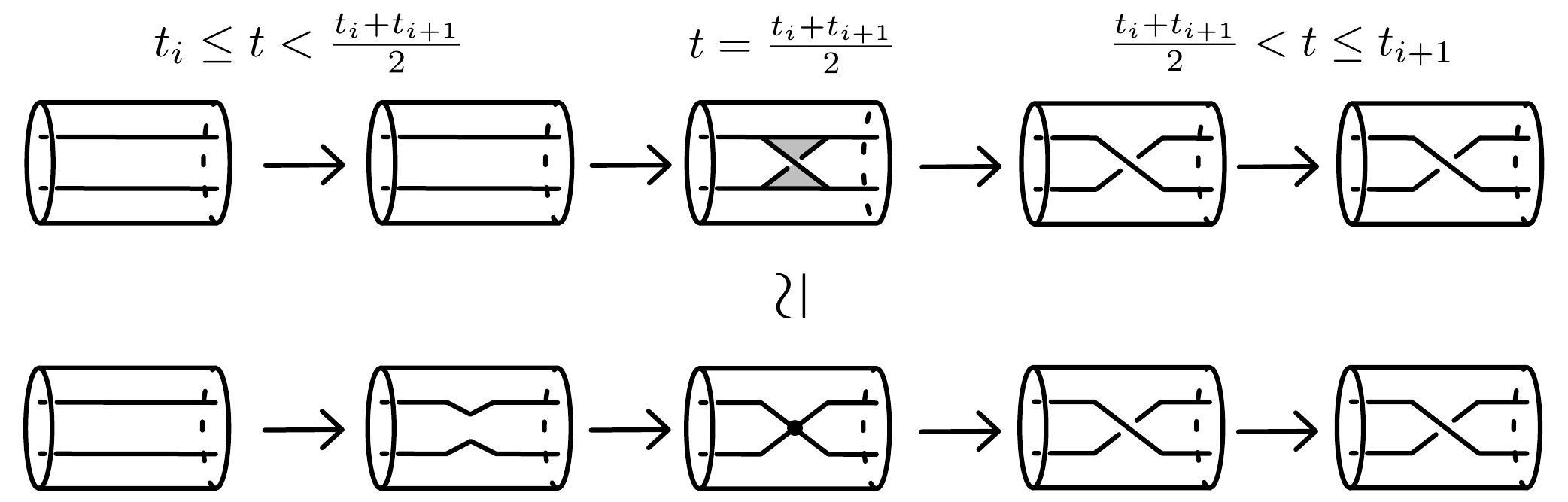}
   \caption{An isotopic deformation changing a band to a branch point.}
   \label{Figure: MPs of band and branch}
\end{figure}

\textbf{Step $3$}: Finally, we show that the surface-link $F$ is equivalent to the plat closure $\widetilde{S}$ of $S$.

Let $p:\R^4=\R^3\times\R \to \R^3$ and $h: \R^4=\R^3\times \R \to \R$ be the projections onto the first and second factors, respectively.
We regard $h$ as a height function of $\R^4$.
Let $A$ be the surface of wicket type associated with $S$.
Note that $\partial A = \partial S = \partial S_0$.
Let $F_0 = S_0\cup A$.
Then $F_0$ is a surface-link equivalent to $\widetilde{S} = S \cup A$.
Thus we show that $F_0$ and $F$ are equivalent.

By an ambient isotopy of $\R^4$ keeping $\R^3 \times (t_0, t_7)$ fixed pointwise, we deform $F_0$ to a surface-link $F_1$ such that
\begin{align*}
   p(F_1\cap \R^3\times \{t\}) ~=~
   \begin{cases}
      \mathbf{D}_1  & (t=t_7),\\
      \widetilde{\alpha_2^*} & ((t_5+t_6)/2 < t < t_7),\\
      \widetilde{\alpha_2^*} \cup |B_1^+| & (t = (t_5+t_6)/2),\\
      \mathrm{p}(F_0 \cap \R^3\times \{t\})  & ((t_1+t_2)/2 < t < (t_5+t_6)/2),\\
      \widetilde{\alpha_1^*} \cup |B_1^-| & (t = (t_1+t_2)/2),\\
      \widetilde{\alpha_1^*} & (t_0 < t < (t_1+t_2)/2),\\
      \mathbf{d}_1   & (t=t_0),\\
      \emptyset  & \textrm{otherwise},
   \end{cases}
\end{align*}
where $|B_1^-|$ (resp. $|B_1^+|$) is the union of the bands belonging to $B_1^-$ (resp. $B_1^+$), and $\mathbf{d}_1$ (resp. $\mathbf{D}_1$) is a union of mutually disjoint $m$ $2$-disks in $\R^3$ bounded by $\widetilde{\alpha_1^*}$ (resp. $\widetilde{\alpha_2^*}$) such that $\mathbf{d}_1$ (resp. $\mathbf{D}_1$) is disjoint from $|B_1^-|$ (resp. $|B_1^+|$) as in the left of Figure~\ref{Figure: Thm1-4 The minimal disks at t=3} except for the attaching arcs of the bands, respectively.
Next, we define a surface-link $F_2$ in $\R^4$ by
\begin{align*}
   p(F_2\cap \R^3\times \{t\}) ~=~
   \begin{cases}
      \mathbf{D}_2 & (t=t_7),\\
      \widetilde{\alpha_2} & ((t_5+t_6)/2 \leq t < t_7),\\
      \mathrm{p}(F_0 \cap \R^3\times \{t\}) & ((t_1+t_2)/2 < t < (t_5+t_6)/2),\\
      \widetilde{\alpha_1} & (t_0 < t \leq (t_1+t_2)/2),\\
      \mathbf{d}_2 & (t=t_0),\\
      \emptyset & \textrm{otherwise},
   \end{cases}
\end{align*}
where $\mathbf{d}_2 = \mathbf{d}_1 \cup |B_1^-|$ (resp. $\mathbf{D}_2 = \mathbf{D}_1 \cup |B_1^+|$) is the union of mutually disjoint $c_1$ (resp. $c_2$) $2$-disks as in the right of Figure~\ref{Figure: Thm1-4 The minimal disks at t=3}, respectively.
\begin{figure}[h]
   \centering
   \includegraphics[width = \hsize]{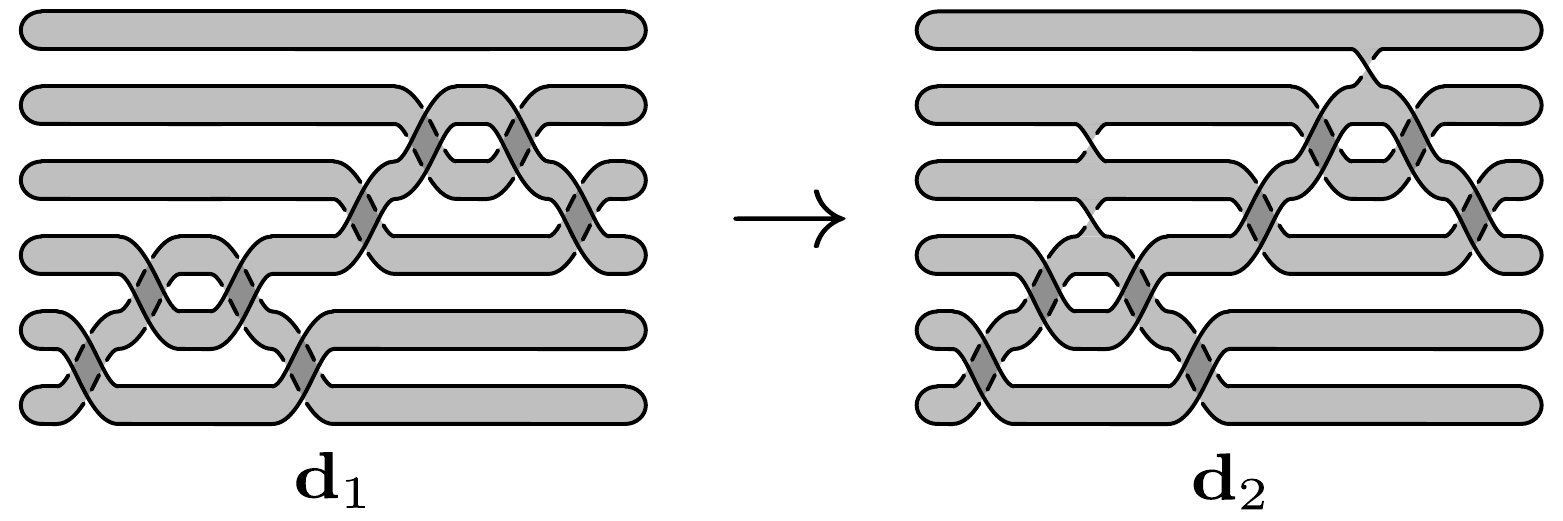}
   \caption{$\mathbf{d}_2$ is the union of $\mathbf{d}_1$ and $|B_1^-|$.}
   \label{Figure: Thm1-4 The minimal disks at t=3}
\end{figure}

Then, $F_2$ is obtained from $F_1$ by cellular moves (cf. \cite{Rourke-Sanderson1982}) along $3$-cells $|B_1^-|\times [t_0,(t_1+t_2)/2] \cup |B_1^+|\times [(t_5+t_6)/2,t_7]$.
This implies that $F_1$ and $F_2$ are equivalent.

Note that $F_2 \cap \R^2 \times \{t_0\} = \mathbf{d}_2 \times \{t_0\}$ is the union of all minimal disks of $F$ with respect to the height function $h$, $F_2 \cap \R^2 \times \{t_7\} = \mathbf{D}_2 \times \{t_7\}$ is the union of all maximal disks of $F$, and all saddle bands of $F$ appear at $t = (t_3 + t_4)/2$ as bands belonging to $B_1$.
By Lemma~\ref{Lemma: 2017_book-thm3.3.2}, $F_2$ is equivalent to a closed realizing surface of the banded link $(\widetilde{\beta_1}, B_1)$.

Since $(L_0,B_0)$ is ambient isotopic to $(\widetilde{\beta_1},B_1)$ and $F$ is equivalent to a closed realizing surface of $(L_0,B_0)$, we see that $F_2$ is equivalent to $F$.
\end{proof}

Next, we show Theorem\ref{Main Theorem-B The existence of a genuine plat form}.
We define the $2m$-braid $\Delta_m$ by $\Delta_1 = Q_2\times I$ and $\Delta_m = \prod_{k=1}^{m-1} (\sigma_{2k}\,\sigma_{2k-1}\cdots\sigma_2\,\sigma_1)$ for $m\geq 2$.
See Figure~\ref{Figure: Delta_1-2-3}.

\begin{figure}[h]
   \centering
   \includegraphics[width = \hsize]{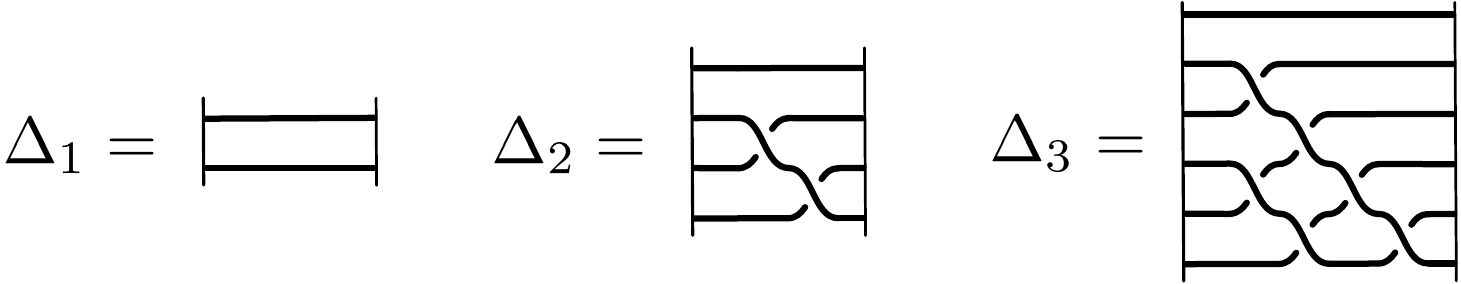}
   \caption{The $2m$-braids $\Delta_{m}$ ($m = 1,2,3$).}
   \label{Figure: Delta_1-2-3}
\end{figure}

Note that the closure of an $m$-braid $b$ is equivalent to the plat closure of a $2m$-braid $\Delta_m\, \iota_{m}^{2m}(b)\, \Delta_m^{-1}$.
See Figures~\ref{Figure: Thm2-5 deformation of Delta_m} and \ref{Figure: Thm2-6 deformation of each side of b}.

\begin{figure}[h]
   \centering
   \includegraphics[width = \hsize]{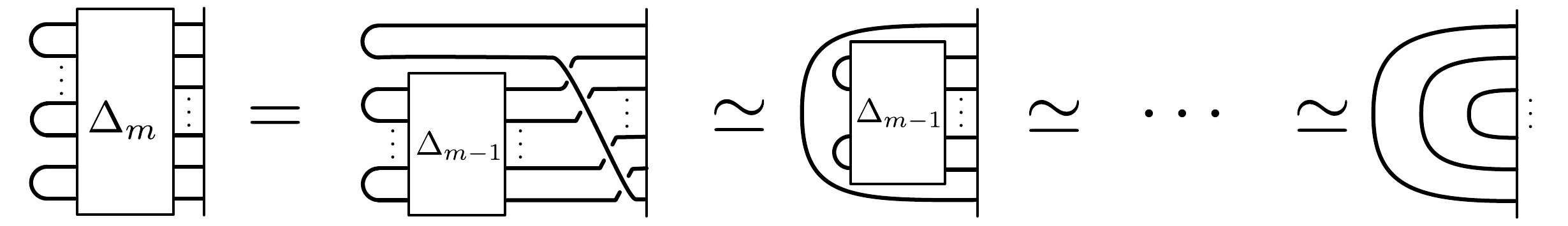}
   \caption{An isotopic deformation of $\Delta_{m}$ with the standard wicket configuration $w_0$ to a configuration of wickets appearing in a closed braid form (Figure~\ref{Figure: The closure of a braid}).}
   \label{Figure: Thm2-5 deformation of Delta_m}
\end{figure}

\begin{figure}[h]
   \centering
   \includegraphics[height=15mm]{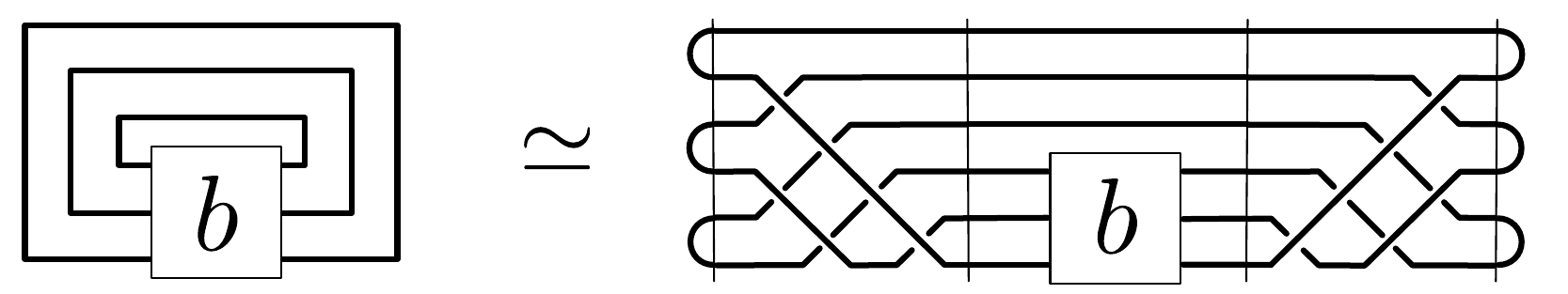}
   \caption{A transformation from the closure of $b$ to the plat closure of $\Delta_m\, \iota_{m}^{2m}(b)\, \Delta_m^{-1}$ ($m=3$).}
   \label{Figure: Thm2-6 deformation of each side of b}
\end{figure}

\begin{proof}[Proof of Theorem~\ref{Main Theorem-B The existence of a genuine plat form}]
Let $F$ be an orientable surface-link.
By Proposition~\ref{Proposition: Alexander theorem wrt closed 2-dim braids}, there exists a $2$-dimensional braid $S$ in $D_1 \times D_2 = D_1 \times I \times I$ whose closure in $\R^4$ is equivalent to $F$.
Let $m$ be the degree of $S$ and $S_{[t]}$ the cross-section $S\cap D_1\times (I\times \{t\})$ for each $t\in I$.
See Figure~\ref{Figure: mp of the closure of 2-dim braid} when $m =3$.
Let $S_1$ be the $2$-dimensional braid of degree $2m$ obtained from $S$ by adding trivial $m$ sheets.

Let $\varepsilon$ be a positive number and let $D'_2 = I \times [-\varepsilon, 1+\varepsilon]$.
We consider a 2-dimensional braid $S_2$ of degree $2m$ in $D_1\times D_2' = D_1\times (I\times [-\varepsilon,1+\varepsilon])$ with a motion picture $(S_2)_{[t]}$ as in Figure~\ref{Figure: Thm2-8 motion picture of S2}.
Here, the motion picture $(S_2)_{[t]}$ for $t\in [-\varepsilon,0]$ (or $t\in [1,1+\varepsilon]$) is the $1$-parameter family of $2m$-braids changing $1_{2m}$ to $\Delta_m\,\Delta_m^{-1}$ (or $\Delta_m\,\Delta_m^{-1}$ to $1_{2m}$), respectively, and the motion picture $(S_2)_{[t]}$ for $t\in I = [0,1]$ is the composition of $\Delta_m$, $(S_1)_{[t]}$ and $\Delta_m^{-1}$.


\begin{figure}[h]
   \centering
   \includegraphics[width = \hsize]{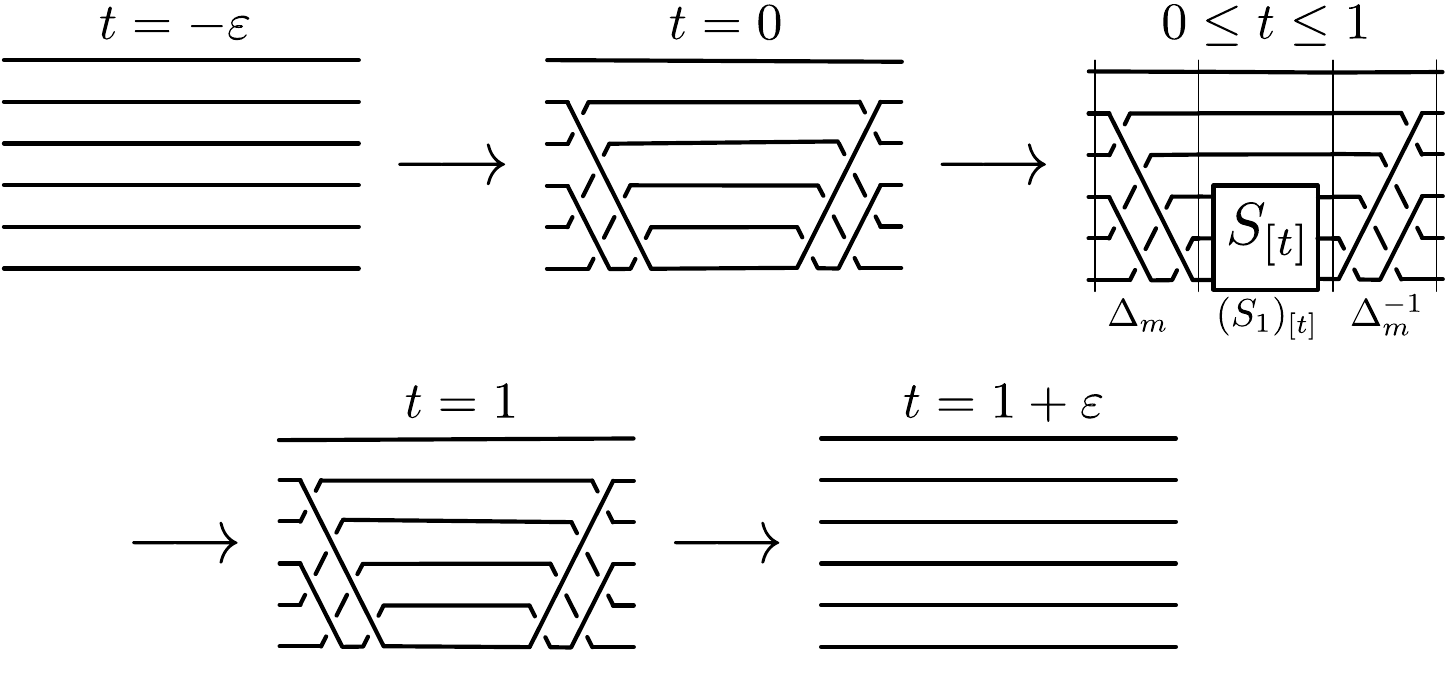}
   \caption{A motion picture of $S_2$ ($m=3$).}
   \label{Figure: Thm2-8 motion picture of S2}
\end{figure}

As a result, the plat closure of $S_2$ has the motion picture as in Figure~\ref{Figure: Thm2-9 motion picture of the plat closure of S2}.
By comparing Figure~\ref{Figure: mp of the closure of 2-dim braid} and Figure~\ref{Figure: Thm2-9 motion picture of the plat closure of S2}, we see that the closure of $S$ is equivalent to the plat closure of $S_2$.
Hence, $F$ has a genuine plat form presentation.
\end{proof}

\begin{figure}[h]
   \centering
   \includegraphics[width = \hsize]{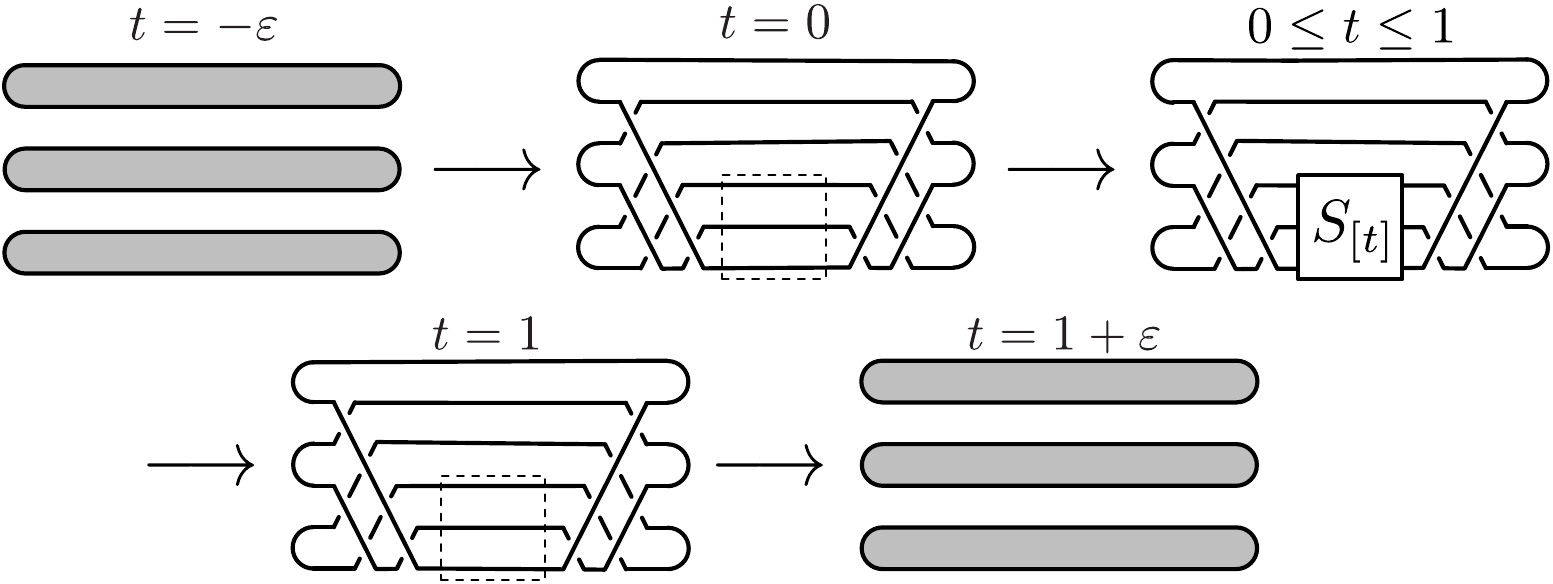}
   \caption{A motion picture of $\widetilde{S_2}$ ($m = 3$).}
   \label{Figure: Thm2-9 motion picture of the plat closure of S2}
\end{figure}

\begin{remark}
   In Lemma~\ref{Lemma: Deformation to normal banded braid form}, each subcylinder $U_i$ contains a part of a banded link as in the left of Figure~\ref{Figure:remark-4}.
   However, we may assume that for each $i$, the band in $U_i$ is either as in the left or as in the right of Figure~\ref{Figure:remark-4}.
   Then we have another braided surface in the proof of Theorem~\ref{Main theorem-A The existence of a plat form}, where the corresponding branch point changes the sign.
   (A branch point of a braided surface is \textit{positive} (or \textit{negative}) if the local monodromy is a conjugate of a standard generator (or its inverse), cf. \cite{Kamada2002_book, Kamada2017_book}).

   \begin{figure}[h]
      \centering
      \includegraphics[height = 25mm]{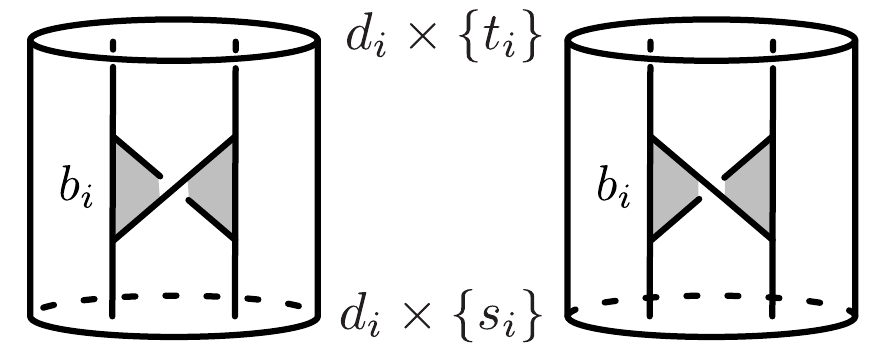}
      \caption{Two types of half-twisted bands in a subcylinder $U_i = d_i \times [s_i, t_i]$.}
      \label{Figure:remark-4}
   \end{figure}
\end{remark}

\section{The plat index of a surface-link and examples}\label{Section: The plat index of surface-links and examples}
In this section, we introduce two surface-link invariants called the plat index and the genuine plat index.

\begin{definition}
   Let $F$ be a surface-link.
   The \textit{plat index of $F$}, denoted by $\mathrm{Plat}(F)$, is defined as the half of the minimum degree of all adequate braided surfaces whose plat closures are equivalent to $F$:
   \[
      \mathrm{Plat}(F) ~=~ \min \{\,\deg S / 2 \,|\, S~\mbox{is a braided surface with } \widetilde{S} \simeq F \}.
   \]
\end{definition}

\begin{definition}
   Let $F$ be a surface-link.
   If $F$ admits a genuine plat form, the \textit{genuine plat index of $F$}, denoted by $\mathrm{g.Plat}(F)$, is defined as the half of the minimum degree of all $2$-dimensional braids whose plat closures are equivalent to $F$.
   If $F$ dose not admit a genuine plat form, it is defined as infinity:
   \[
      \mathrm{g.Plat}(F) ~=~
      \begin{cases}
         \min \{\deg S / 2 \,|\, S~\mbox{is a}~2\mathchar`-\mbox{dimensional braid with } \widetilde{S} \simeq F \},\\
         \infty \quad \mbox{if } F \mbox{ admits no genuine plat forms}.\\
      \end{cases}
   \]
\end{definition}
By definition, it holds that $\mathrm{Plat}(F) \leq \mathrm{g.Plat}(F)$ for every surface-link $F$.
Moreover, from the proof of Theorem~\ref{Main Theorem-B The existence of a genuine plat form}, we have the following proposition.

\begin{proposition}\label{Proposition inequality of plat indices}
   The following inequalities hold for every orientable surface-link $F$:
   \[
      \mathrm{Plat}(F) ~\leq~ \mathrm{g.Plat}(F) ~\leq~ \mathrm{Braid}(F).
   \]
\end{proposition}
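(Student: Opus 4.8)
The plan is to establish the two inequalities separately, the first being essentially definitional and the second coming from the construction in the proof of Theorem~\ref{Main Theorem-B The existence of a genuine plat form}. For the inequality $\mathrm{Plat}(F) \leq \mathrm{g.Plat}(F)$: every $2$-dimensional braid is in particular a braided surface with trivial boundary, and a braided surface with trivial boundary is automatically adequate since the associated braid $\beta_S$ is the trivial braid $1_{2m}$, which is adequate (it is $\beta_f$ for the constant loop at $w_0$, and the trivial braid lies in Hilden's subgroup $K_{2m}$). Hence the plat closure $\widetilde{S}$ of a $2$-dimensional braid $S$ of degree $2m$ coincides with the plat closure in the sense of braided surfaces, so the set of degrees over which we minimize to compute $\mathrm{g.Plat}(F)$ is a subset of that for $\mathrm{Plat}(F)$; taking minima gives the inequality. (This inequality was already asserted in the paragraph preceding the proposition, so I would state it briefly and move on.)

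For the inequality $\mathrm{g.Plat}(F) \leq \mathrm{Braid}(F)$: let $S$ be a $2$-dimensional $m$-braid of degree $m = \mathrm{Braid}(F)$ whose closure $\overline{S}$ is equivalent to $F$ (such an $S$ exists by Proposition~\ref{Proposition: Alexander's theorem in four dimension} and the definition of $\mathrm{Braid}(F)$). Running through the construction in the proof of Theorem~\ref{Main Theorem-B The existence of a genuine plat form}, I would form $S_1$ by adding $m$ trivial sheets to get a $2$-dimensional $2m$-braid, and then build the $2$-dimensional $2m$-braid $S_2$ in $D_1 \times D_2'$ whose motion picture is the conjugation of $(S_1)_{[t]}$ by $\Delta_m$ on one side and $\Delta_m^{-1}$ on the other, capped off at the two ends by the $1$-parameter families deforming $1_{2m}$ to $\Delta_m\Delta_m^{-1}$ and back. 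The key point, already verified in that proof, is that the genuine plat closure $\widetilde{S_2}$ is equivalent to the closure $\overline{S}$, hence to $F$. Since $S_2$ is a $2$-dimensional braid of degree $2m$, we get $\mathrm{g.Plat}(F) \leq \deg S_2 / 2 = m = \mathrm{Braid}(F)$.

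Concatenating the two inequalities yields $\mathrm{Plat}(F) \leq \mathrm{g.Plat}(F) \leq \mathrm{Braid}(F)$, as claimed. I do not expect any serious obstacle here: the content is entirely contained in the earlier constructions, and the only thing to be careful about is that when $F$ is orientable it does admit a genuine plat form (so that $\mathrm{g.Plat}(F)$ is defined in the first place), which is exactly the statement of Theorem~\ref{Main Theorem-B The existence of a genuine plat form}. The mildest subtlety is checking that adding trivial sheets and conjugating by $\Delta_m$ genuinely produces a $2$-dimensional braid (trivial boundary) rather than merely a braided surface, but this is immediate because $\Delta_m\,\iota_m^{2m}(b)\,\Delta_m^{-1}$ has the same two-sided boundary behavior as the trivial braid once the end caps are inserted, as arranged in Figure~\ref{Figure: Thm2-8 motion picture of S2}. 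So the proof is a short assembly of facts established above.
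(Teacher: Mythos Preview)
Your proposal is correct and matches the paper's approach exactly: the paper states the first inequality as following ``by definition'' (since every $2$-dimensional braid is an adequate braided surface) and derives the second ``from the proof of Theorem~\ref{Main Theorem-B The existence of a genuine plat form},'' where a $2$-dimensional $m$-braid with closure $F$ is converted to a $2$-dimensional $2m$-braid $S_2$ with $\widetilde{S_2}\simeq F$. Your write-up simply makes these two observations explicit, which is all that is required.
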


In the rest of this section, we show some examples of surface-links in plat forms and discuss the plat index and the genuine plat index.

We first recall the notion of a braid system of a braided surface.
Refer to \cite{Kamada2002_book} for more details.
Let $\mathrm{pr}_i: D_1\times D_2 \to D_i$ ($i=1,2$) be the projection and $\mathcal{C}_n$ the configuration space of $n$ points of $\Int D_1$.
Let $S$ be a braided surface of degree $n$, and $\Sigma(S)$ the branch locus of $\pi_S = \mathrm{pr}_2|_S: S \to D_2$.
Let $y_0 \in \partial D_2$ be a fixed base point.

The \textit{braid monodromy} of $S$ is a homomorphism $\rho_S: \pi_1(D_2\setminus \Sigma(S), y_0) \to \pi_1(\mathcal{C}_n, Q_n) = B_n$ defined as follows:
For a loop $c: (I,\partial I) \to (D_2\setminus \Sigma(S), y_0)$, define a loop $\rho_S(c): (I,\partial I) \to (\mathcal{C}_n, Q_n)$ as $\rho_S(c)(t) = \mathrm{pr}_1(\pi_S^{-1}(c(t)))$.
Then the braid monodromy of $S$ is defined as a group homomorphism sending $[c]$ to $[\rho_S(c)] \in \pi_1(\mathcal{C}_n, Q_n)$.

Let $r$ be a positive integer.
A \textit{Hurwitz arc system} in $D_2$ (with the base point $y_0$) is an $r$-tuple $\mathcal{A} = (\alpha_1, \cdots, \alpha_r)$ of oriented simple arcs in $D_2$ such that
\begin{enumerate}
   \item for each $i$, $\alpha_i \cap \partial D_2 = \partial \alpha_i \cap \partial D_2 = \{y_0\}$ and this is the terminal point of $\alpha_i$,
   \item for $i \neq j$, $\alpha_i \cap \alpha_j = \{y_0\}$, and
   \item $\alpha_1, \ldots,  \alpha_r$ appear in this order around the base point $y_0$.
\end{enumerate}
The set of initial points of $\alpha_1, \ldots, \alpha_r$ is called the \textit{starting point set} of $\mathcal{A}$.

Let $\mathcal{A} = (\alpha_1, \cdots, \alpha_r)$ be a Hurwitz arc system with the starting point set $\Sigma(S)$.
For each $i$, let $N_i$ be a (small) regular neighborhood of the starting point of $\alpha_i$, $\overline{\alpha_i}$ an oriented arc obtained from $\alpha_i$ by restricting to $D_2\setminus \Int N_i$, and $\gamma_i$ a loop $\overline{\alpha_i}^{-1} \cdot \partial N_i \cdot \overline{\alpha_i}$ in $D_2\setminus \Sigma(S)$ with base point $y_0$.
Here, $\partial N_i$ is oriented counter-clockwise.
Then $\pi_1(D_2\setminus \Sigma(S), y_0)$ is generated by $[\gamma_1], [\gamma_2], \ldots, [\gamma_r]$ and we have $[\partial D_2] = [\gamma_1] \cdots [\gamma_r]$.
The \textit{braid system} of $S$ associated with $\mathcal{A}$ is an $r$-tuple $b_S$ of elements of $B_n$ defined as
\[
   b_S ~=~ (\rho_S([\gamma_1]), \ldots, \rho_S([\gamma_r])) \in (B_n)^r.
\]

It is known that $\rho_S([\gamma_i])$ is a conjugation of a standard generator or its inverse, $\sigma_1^{\,\varepsilon}$ ($\varepsilon \in \{\pm 1\}$), such that $\varepsilon$ is the sign of the branch point which is the starting point of $\alpha_i$.
The composition $\rho_S([\gamma_1])\rho_S([\gamma_2]) \cdots \rho_S([\gamma_r])$ is equal to $\beta_S$ in $B_n$.

The \textit{slide action} of the braid group $B_r$ on $(B_n)^r$ is a left group action defined as
\[
   \mathrm{slide}(\sigma_j)(\beta_1, \ldots, \beta_r) ~=~ (\beta_1, \ldots, \beta_{j-1}, \beta_j\beta_{j+1}\beta_j^{-1}, \beta_j, \beta_{j+2}, \ldots, \beta_r)
\]
for $\sigma_j\in B_r$ and $(\beta_1, \ldots, \beta_r)\in (B_n)^r$.
Two elements of $(B_n)^r$ are said to be \textit{Hurwitz equivalent} if they are in the same orbit of the slide action of $B_r$.

\begin{lemma}[cf. \cite{Kamada2002_book,Moishezon1981,Rudolph1985}]\label{Lemma: Hurwitz equivalence}
   Two braided surfaces in $D_1\times D_2$ are equivalent if and only if their braid systems are Hurwitz equivalent.
\end{lemma}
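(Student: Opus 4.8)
The plan is to prove the two implications separately, using the principle that a braided surface is reconstructed, up to equivalence, from its braid monodromy, together with the observation that changing the Hurwitz arc system used to read off a braid system amounts precisely to applying the slide action of $B_r$.

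First I would treat the ``only if'' direction. Suppose $S \simeq S'$ via a fiber-preserving ambient isotopy $\{h_s\}_{s\in I}$ of $D_1\times D_2$ with $h_s|_{\mathrm{pr}_2^{-1}(y_0)} = \id$ and $h_1(S) = S'$. Being fiber-preserving, each $h_s$ covers an ambient isotopy $\bar h_s$ of $D_2$, and $\bar h_s(y_0) = y_0$; moreover $\bar h_s$ carries $\Sigma(S)$ onto $\Sigma(S')$. Fix a Hurwitz arc system $\mathcal{A} = (\alpha_1,\dots,\alpha_r)$ with starting point set $\Sigma(S)$. Then $\bar h_1(\mathcal{A})$ is a Hurwitz arc system with starting point set $\Sigma(S')$, and because $\{h_s\}$ is fiber-preserving and is the identity over $y_0$, the loops $\gamma_i$ for $\mathcal{A}$ are carried to the corresponding loops for $\bar h_1(\mathcal{A})$ by a path of homeomorphisms intertwining the braid monodromies; hence the braid system of $S'$ associated with $\bar h_1(\mathcal{A})$ equals the braid system $b_S$ associated with $\mathcal{A}$. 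Finally, any two Hurwitz arc systems for $S'$ with the same starting point set are related by an ambient isotopy of $D_2$ rel $y_0$ followed by a sequence of elementary arc exchanges, and each exchange changes the associated braid system by a single slide move; so the braid system $b_{S'}$ relative to any fixed Hurwitz arc system is Hurwitz equivalent to $b_S$.

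Next, the ``if'' direction. Suppose $b_S$ (relative to $\mathcal{A}$) and $b_{S'}$ (relative to $\mathcal{A}'$) are Hurwitz equivalent. Since a slide move on a braid system is realized by replacing the Hurwitz arc system with another one having the same starting point set, after replacing $\mathcal{A}'$ we may assume $b_S = b_{S'}$ as $r$-tuples in $B_n$. Choosing an ambient isotopy $\{\bar h_s\}$ of $D_2$ rel $y_0$ carrying $\Sigma(S')$ to $\Sigma(S)$ and $\mathcal{A}'$ to $\mathcal{A}$, the isotopy $h_s = \id_{D_1}\times \bar h_s$ of $D_1\times D_2$ is fiber-preserving, is the identity on $\mathrm{pr}_2^{-1}(y_0)$, and carries $S'$ to a braided surface $S''$ with branch point set $\Sigma(S)$, Hurwitz arc system $\mathcal{A}$, and braid system $b_S$. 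It remains to show $S \simeq S''$. For this I would cut $D_2$ open along a regular neighborhood $U$ of $\alpha_1\cup\dots\cup\alpha_r$: the complement $D_2\setminus\Int U$ is a disk containing no branch values, so both $S\cap \mathrm{pr}_2^{-1}(D_2\setminus\Int U)$ and $S''\cap \mathrm{pr}_2^{-1}(D_2\setminus\Int U)$ are unbranched, hence trivial braided surfaces over a disk, hence fiber-preservingly isotopic rel the fibre over $y_0$ by Lemma~\ref{Lemma: Triviality of braided surface}. Over $U$, which is a union of half-disks each carrying one branch value, a simple branched covering with prescribed monodromy $\rho_S([\gamma_i])$, a conjugate of $\sigma_1^{\pm 1}$, has a standard local model ($z\mapsto z^2$ on two braided sheets) unique up to fiber-preserving equivalence. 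The two pieces match along $\partial U$ because the braid systems, and hence the gluing braids along the $\alpha_i$ and the boundary braid $\beta_S = \rho_S([\gamma_1])\cdots\rho_S([\gamma_r]) = \beta_{S''}$, agree; assembling the local equivalences produces a fiber-preserving ambient isotopy of $D_1\times D_2$, the identity over $y_0$, taking $S$ to $S''$. Therefore $S\simeq S''\simeq S'$.

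The main obstacle is the last step of the ``if'' direction: carefully establishing that a braided surface is determined up to fiber-preserving equivalence by the combinatorial data obtained by cutting along a Hurwitz arc system --- the uniqueness of the local model near a simple branch point and the compatibility of the gluing along $\partial U$, including the closed-braid boundary condition in $D_1\times\partial D_2$. I would organize this by induction on the number $r$ of branch points, peeling off one arc of $\mathcal{A}$ at a time, and take care throughout that every isotopy produced is genuinely fiber-preserving over $D_2$ and restricts to the identity on $\mathrm{pr}_2^{-1}(y_0)$.
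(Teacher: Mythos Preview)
The paper does not actually prove this lemma: it is stated with a ``cf.'' citation to \cite{Kamada2002_book,Moishezon1981,Rudolph1985} and used as a known result, with no argument supplied. So there is no in-paper proof to compare your proposal against.

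That said, your sketch is the standard argument one finds in those references, and it is essentially correct. A couple of small points worth tightening. First, in the ``only if'' direction you implicitly use that the isotopy $\{\bar h_s\}$ of $D_2$ fixes $y_0$ for all $s$, not just at $s=0,1$; this follows because $h_s$ is the identity on the entire fiber $\mathrm{pr}_2^{-1}(y_0)$, so make that explicit. Second, in the ``if'' direction, the step ``the two pieces match along $\partial U$'' is where most of the work hides: after trivializing over $D_2\setminus\Int U$ and normalizing the local models over each half-disk of $U$, you still need to produce a fiber-preserving isotopy on the overlap that agrees with both normalizations. The cleanest way to organize this is not to cut out all of $U$ at once but, as you suggest at the end, to peel off one Hurwitz arc at a time and invoke the uniqueness of the simple local model together with Lemma~\ref{Lemma: Triviality of braided surface} inductively; that avoids having to control a complicated gluing along all of $\partial U$ simultaneously. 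With those adjustments your outline matches the proof in \cite{Kamada2002_book}.
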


Let $e(F)$ be the normal Euler number of a surface-knot $F$. 
The normal Euler number of any orientable surface-knot is $0$, and the normal Euler number of a trivial non-orientable surface-knot, which is a connected sum of $p$ copies of $P_+$ and $q$ copies of $P_-$, is $2(p-q)$ (cf. \cite{Carter-Saito1992,Hosokawa-Kawauchi1979}).

\begin{theorem}\label{Theorem The classification of trivial surface-knots}
   Let $F$ be a surface-link.
   \begin{enumerate}
      \item $\mathrm{Plat}(F) = 1$ if and only if $F$ is either a trivial $2$-knot or a trivial non-orientable surface-knot.
      \item $\mathrm{g.Plat}(F) = 1$ if and only if $F$ is either a trivial $2$-knot or a trivial non-orientable surface-knot with $e(F) = 0$.
      \item If $F$ is a trivial orientable surface-knot with positive genus, then $\mathrm{Plat}(F) = \mathrm{g.Plat}(F) = 2$.
   \end{enumerate}
\end{theorem}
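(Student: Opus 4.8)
The plan is to reduce the entire statement to a classification of braided surfaces of degree $2$, whose braid groups $B_2\cong\Z$ are abelian, and then to combine this with Proposition~\ref{Proposition inequality of plat indices}. First observe that the degree of an adequate braided surface, and of a $2$-dimensional braid, is a positive even integer, so $\mathrm{Plat}(F)\geq 1$ and $\mathrm{g.Plat}(F)\geq 1$ for every surface-link $F$; hence the content of parts (1) and (2) is to determine which surface-links occur as plat closures of degree-$2$ braided surfaces, respectively degree-$2$ $2$-dimensional braids. Note also that for $m=1$ Hilden's group $K_2$ is all of $B_2$ (indeed $\mathcal{W}_1\to\mathcal{C}_2$ is a bijection), so \emph{every} braided surface of degree $2$ is adequate.

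Next I would classify degree-$2$ braided surfaces. Let $S$ have degree $2$ and $r$ branch points. Since $B_2=\langle\sigma_1\rangle$ is abelian, the only conjugates of $\sigma_1^{\pm1}$ are $\sigma_1^{\pm1}$, so a braid system of $S$ has the form $(\sigma_1^{\varepsilon_1},\dots,\sigma_1^{\varepsilon_r})$ with $\varepsilon_i\in\{\pm1\}$, and the slide action of $B_r$ merely permutes the entries; thus up to Hurwitz equivalence this tuple is $(\sigma_1,\dots,\sigma_1,\sigma_1^{-1},\dots,\sigma_1^{-1})$ with $p$ entries $\sigma_1$ and $q$ entries $\sigma_1^{-1}$, where $p+q=r$. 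By Lemma~\ref{Lemma: Hurwitz equivalence}, $S$ is then equivalent to a standard braided surface $S_{p,q}$ with this braid system, and hence $\widetilde{S}\simeq\widetilde{S_{p,q}}$. Its boundary braid is $\beta_{S_{p,q}}=\sigma_1^{\,p-q}$, which is trivial precisely when $p=q$; therefore $S_{p,q}$ is a $2$-dimensional braid if and only if $p=q$.

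The heart of the argument is the identification $\widetilde{S_{p,q}}\simeq P_+^{\#p}\#P_-^{\#q}$, where the right-hand side is read as a trivial $2$-knot when $p=q=0$. I would prove this using the normal plat form of Section~\ref{Section: A plat form for surface-links}: the motion picture of $\widetilde{S_{p,q}}$ consists of one minimal disk at the bottom and one maximal disk at the top (as for $\widetilde{1_2}$, an unknotted $2$-sphere) together with $r=p+q$ saddles, one per branch point, where a branch point of sign $+$ (resp.\ $-$) contributes a standardly attached half-twisted band of the corresponding handedness, cf.\ Figures~\ref{Figure: MPs of band and branch} and~\ref{Figure: mp of P2}. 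Inserting such a half-twisted band into the motion picture of a trivial $2$-knot is exactly a connected sum with $P_+$ or $P_-$, so $\widetilde{S_{p,q}}\simeq P_+^{\#p}\#P_-^{\#q}$; in particular $\chi(\widetilde{S_{p,q}})=2-r$, the surface-knot is non-orientable once $r\geq1$, and $e(\widetilde{S_{p,q}})=2(p-q)$. Parts (1) and (2) now follow. If $\mathrm{Plat}(F)=1$ then $F\simeq\widetilde{S_{p,q}}$ is a trivial $2$-knot or, when $r\geq1$, a trivial non-orientable surface-knot; conversely every trivial $2$-knot is $\widetilde{1_2}$, and — using Dyck's theorem to write an arbitrary trivial non-orientable surface-knot in the form $P_+^{\#p}\#P_-^{\#q}$ with $p+q\geq1$ — every such surface-knot is some $\widetilde{S_{p,q}}$, so $\mathrm{Plat}(F)=1$. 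Requiring in addition that the braided surface be a $2$-dimensional braid forces $p=q$; since $e$ is additive with $e(P_\pm)=\pm2$, these are exactly the trivial $2$-knot together with the trivial non-orientable surface-knots with $e(F)=0$.

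Finally, for part (3) let $F$ be a trivial orientable surface-knot of positive genus. It is neither a $2$-knot nor non-orientable, so part (1) gives $\mathrm{Plat}(F)\neq1$, and hence $\mathrm{Plat}(F)\geq2$. On the other hand $F$ is the closure of a $2$-dimensional braid of degree $2$ (a standard construction, e.g.\ the closure of the $2$-dimensional braid with braid system $(\sigma_1,\sigma_1^{-1})^{g+1}$), while $\mathrm{Braid}(F)\geq2$ because every degree-$1$ $2$-dimensional braid closes up to an unknotted $2$-sphere; thus $\mathrm{Braid}(F)=2$, and Proposition~\ref{Proposition inequality of plat indices} yields $2\leq\mathrm{Plat}(F)\leq\mathrm{g.Plat}(F)\leq\mathrm{Braid}(F)=2$. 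The step I expect to be the main obstacle is the motion-picture identification $\widetilde{S_{p,q}}\simeq P_+^{\#p}\#P_-^{\#q}$: one must make the normal form precise enough to see that each branch point really becomes a standard half-twisted band of the correct handedness — this is what decides $P_+$ versus $P_-$ and the value of $e$ — and to confirm non-orientability once $r\geq1$; everything else is bookkeeping with Lemma~\ref{Lemma: Hurwitz equivalence}, Proposition~\ref{Proposition inequality of plat indices}, and the standard structure theory of trivial surface-knots.
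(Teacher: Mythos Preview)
Your proposal is correct and follows essentially the same route as the paper: classify degree-$2$ braided surfaces via their braid systems (using that $B_2$ is abelian so Hurwitz equivalence reduces to permutation), identify the plat closure $\widetilde{S_{p,q}}$ with $P_+^{\#p}\#P_-^{\#q}$ via the normal-form motion picture, and for part~(3) combine $\mathrm{Braid}(F)=2$ with Proposition~\ref{Proposition inequality of plat indices} and part~(1). Your write-up is in places more explicit than the paper's---you note that $K_2=B_2$ so adequacy is automatic, you give the braid system $(\sigma_1,\sigma_1^{-1})^{g+1}$ rather than citing \cite{Kamada1992} for $\mathrm{Braid}(F)=2$, and you invoke Dyck's theorem for the converse---but these are elaborations of the same argument rather than a different one.
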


\begin{proof}
(1) Let $F$ be a surface-link with $\mathrm{Plat}(F) = 1$ and $S$ a braided surface of degree $2$ with $\widetilde{S} \simeq F$.
Let $p$ and $q$ be the numbers of positive and negative branch points of $S$, respectively.
Then a braid system for $S$ is Hurwitz equivalent to $(\sigma_1, \ldots, \sigma_1, \sigma_1^{-1}, \ldots, \sigma_1^{-1})$ consisting of $p$ $\sigma_1$'s and $q$ $\sigma_1^{-1}$'s.
Hence, the equivalence class of $S$ is determined from $p$ and $q$.
Figure~\ref{Figure: mp of 1-plat} is a motion picture of the plat closure of a braided surface of degree $2$ with $p$ positive branch points and $q$ negative branch points.
The motion picture describes a trivial $2$-knot if $p = q = 0$ holds, otherwise, it describes a connected sum of $p$ copies of $P_+$ and $q$ copies of $P_-$.
Therefore, $F$ is either a trivial 2-knot or a trivial non-orientable surface-knot.

Conversely, if $F$ is a trivial $2$-knot, then $\mathrm{Plat}(F) = 1$.
If $F$ is a trivial non-orientable surface-knot, then $F$ is equivalent to a surface-knot described in Figure~\ref{Figure: mp of 1-plat} and hence $\mathrm{Plat}(F) = 1$.
\begin{figure}[h]
   \centering
   \includegraphics[height = 25mm]{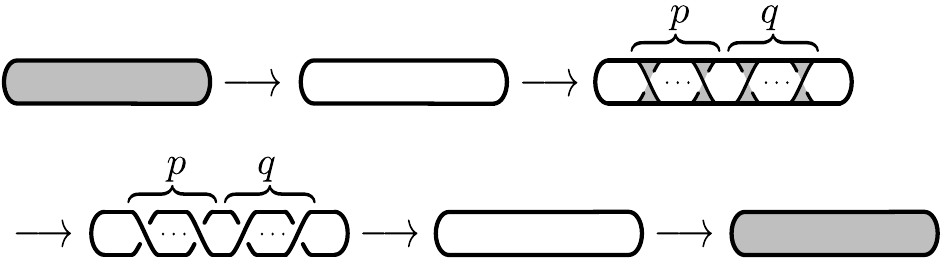}
   \caption{A surface-knot in a (normal) plat form.}
   \label{Figure: mp of 1-plat}
\end{figure}

(2) Let $F$ be a surface-link with $\mathrm{g.Plat}(F) = 1$ and $S$ a $2$-dimensional braid of degree 2 with $\widetilde{S} \simeq F$.
Since $S$ is a $2$-dimensional braid, the number of positive branch points of $S$, denoted by $p$, is equal to the number of negative ones.
Hence, the argument in the proof of (1) implies that $F$ is equivalent to a trivial $2$-knot, when $p = 0$, or a connected sum of $p$ copies of $P_+$ and $p$ copies of $P_-$.
In particular, it holds that $e(F) = 0$.

Conversely, if $F$ is a trivial $2$-knot, then $\mathrm{g.Plat}(F) = 1$.
If $F$ is a trivial non-orientable surface-knot with $e(F) = 0$, then $F$ is a connected sum of $p$ copies of $P_+$ and $p$ copies of $P_-$ for some $p > 0$, which is equivalent to a surface-knot described in Figure~\ref{Figure: mp of 1-plat} with $p = q$.
Hence $\mathrm{g.Plat}(F) = 1$.

(3) Let $F$ be a trivial orientable surface-knot with a positive genus.
Since $\mathrm{Braid}(F) = 2$ (cf. \cite{Kamada1992,Kamada2002_book}), by Proposition~\ref{Proposition inequality of plat indices}, we have $\mathrm{Plat}(F) \leq \mathrm{g.Plat}(F) \leq 2$.
On the other hand, by (1), it holds that $\mathrm{Plat}(F) \neq 1$.
Hence, we have $\mathrm{Plat}(F) = \mathrm{g.Plat}(F) = 2$.
(Figure~\ref{Figure: Descriptions of genuine 1-plat, Sigma-g} shows a motion picture of a genuine plat form of $F$.)
\end{proof}

\begin{figure}[h]
   \centering
   \includegraphics[width = \hsize]{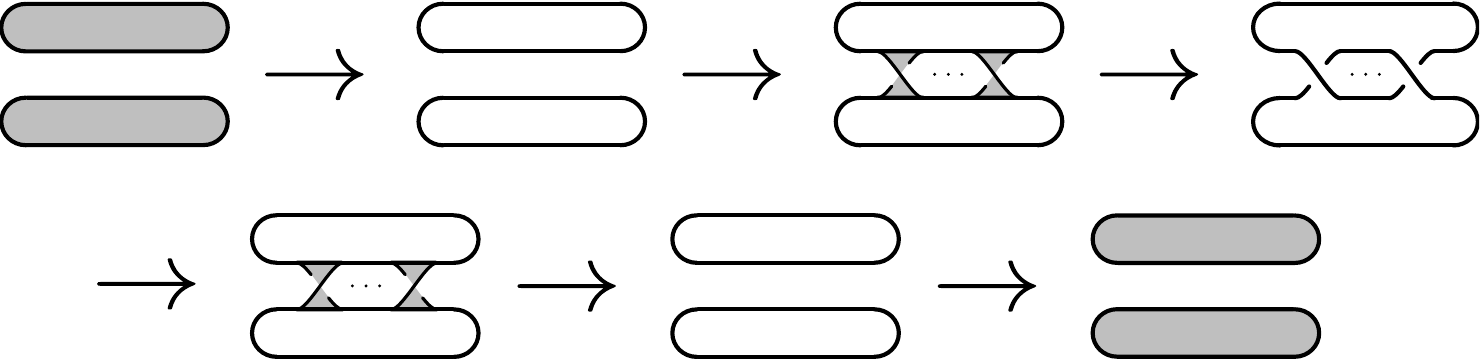}
   \caption{A trivial orientable surface-knot with a positive genus in a genuine plat form.}
   \label{Figure: Descriptions of genuine 1-plat, Sigma-g}
\end{figure}

\begin{proposition}
   Let $F$ be the $2$-knot denoted by $2\_2$ in the table of \cite{Kanenobu-Takahashi2020}, which is depicted in Figure~\ref{Figure: mp of 2-2}.
   Then $\mathrm{Plat}(F) = \mathrm{g.Plat}(F) = 2$.
\end{proposition}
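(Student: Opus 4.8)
The plan is to estimate $\mathrm{Plat}(F)$ from below and $\mathrm{g.Plat}(F)$ from above, and then collapse the two estimates by means of the inequality $\mathrm{Plat}(F)\le\mathrm{g.Plat}(F)$. For the lower bound, the $2$-knot $2\_2$ is nontrivial (this is recorded in \cite{Kanenobu-Takahashi2020}; for instance its Alexander polynomial is not $1$), so by Theorem~\ref{Theorem The classification of trivial surface-knots}(1) it cannot be a surface-link of plat index $1$; hence $\mathrm{Plat}(F)\ge 2$, and therefore $\mathrm{g.Plat}(F)\ge\mathrm{Plat}(F)\ge 2$.

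It remains to prove $\mathrm{g.Plat}(F)\le 2$, i.e. to realize $F$ as the plat closure $\widetilde{S}$ of a $2$-dimensional braid $S$ of degree $4$; granting that, the lower bound forces $\mathrm{Plat}(F)=\mathrm{g.Plat}(F)=2$. (Note that Proposition~\ref{Proposition inequality of plat indices} only yields $\mathrm{g.Plat}(F)\le\mathrm{Braid}(F)$, which need not equal $2$, so the degree-$4$ genuine plat form has to be built directly.) Such an $S$ is strongly constrained: since $\widetilde{S}$ is a $2$-sphere, the formula $\chi(\widetilde{S})=2m-r$ with $m=2$ forces exactly $r=2$ branch points, and because $S$ is a $2$-dimensional braid its boundary braid is trivial, so the product of its braid system equals $1\in B_4$; using Lemma~\ref{Lemma: Hurwitz equivalence} one may then normalize the braid system to the form $(b,b^{-1})$ for a single conjugate $b$ of $\sigma_1$ in $B_4$. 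Thus the problem reduces to exhibiting the correct $b$, together with a proof that the plat closure of the resulting braided surface of degree $4$ is $2\_2$.

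I would produce $b$ from a motion picture of $F$. Starting from Figure~\ref{Figure: mp of 2-2} (equivalently from the standard diagram of $2\_2$ in \cite{Kanenobu-Takahashi2020}), I would deform $F$ by an ambient isotopy of $\R^4$ into the normal genuine plat form of Figure~\ref{Figure: mp of genuine plat cl} based on a $2$-dimensional braid of degree $4$ with exactly two branch points, one positive and one negative (each appearing in the motion picture as in Figure~\ref{Figure: MPs of band and branch}), and read off the conjugating braid $b$ from the level at which the branch points occur.

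The one genuinely nontrivial point is the \emph{verification} that the degree-$4$ $2$-dimensional braid obtained in this way really has plat closure $2\_2$, rather than some simpler (possibly trivial) $2$-knot. I would settle this by carrying the isotopy through explicitly on the motion picture, and, as a check, by computing an invariant that separates $2\_2$ from the trivial $2$-knot — the knot group, the Alexander polynomial, or the invariant used to tabulate $2$-knots in \cite{Kanenobu-Takahashi2020} — from the resulting presentation and matching it with the value listed for $2\_2$. Once this identification is secured we have $F\simeq\widetilde{S}$ with $\deg S=4$, hence $\mathrm{g.Plat}(F)\le 2$, and the argument is complete.
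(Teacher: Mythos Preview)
Your proposal is correct and follows essentially the same strategy as the paper: the lower bound comes from nontriviality of $2\_2$ together with Theorem~\ref{Theorem The classification of trivial surface-knots}(1), and the upper bound from an explicit degree-$4$ genuine plat form obtained by isotoping the given motion picture. The paper carries out the upper bound slightly more concretely, performing the isotopy at the level of the banded link in $\R^3$ (Figures~\ref{Figure: Description of 2-2} and~\ref{Figure: mp of 2-2 in a plat form}) rather than in $\R^4$, and later (after Theorem~\ref{Theorem genuine 2-plat 2-knot is ribbon}) records the explicit braid system as $(\beta^{-1}\sigma_1\beta,\beta^{-1}\sigma_1^{-1}\beta)$ with $\beta=\sigma_2^{3}$; your structural analysis of the braid system and proposed invariant check are sound but not needed once the explicit picture is in hand.
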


\begin{proof}
   Figure~\ref{Figure: Description of 2-2} shows a deformation of a banded link by an isotopy of $\R^3$.
   Using the isotopy, we see that $F$ is equivalent to a surface-knot in a genuine plat form depicted in Figure~\ref{Figure: mp of 2-2 in a plat form}.
   Hence, we have the inequality $\mathrm{Plat}(F)\leq \mathrm{g.Plat}(F)\leq 2$.
   Since $F$ is not a trivial $2$-knot, we have $\mathrm{Plat}(F) = \mathrm{g.Plat}(F) = 2$.
\end{proof}

The braid index of every non-trivial surface-knot is greater than $2$ (\cite{Kamada1992}).
Hence, $2\_2$ is an example such that the equality in $\mathrm{g.Plat}(F) \leq \mathrm{Braid}(F)$ in Proposition~\ref{Proposition inequality of plat indices} does not hold.

\begin{figure}[h]
   \centering
   \includegraphics[width = \hsize]{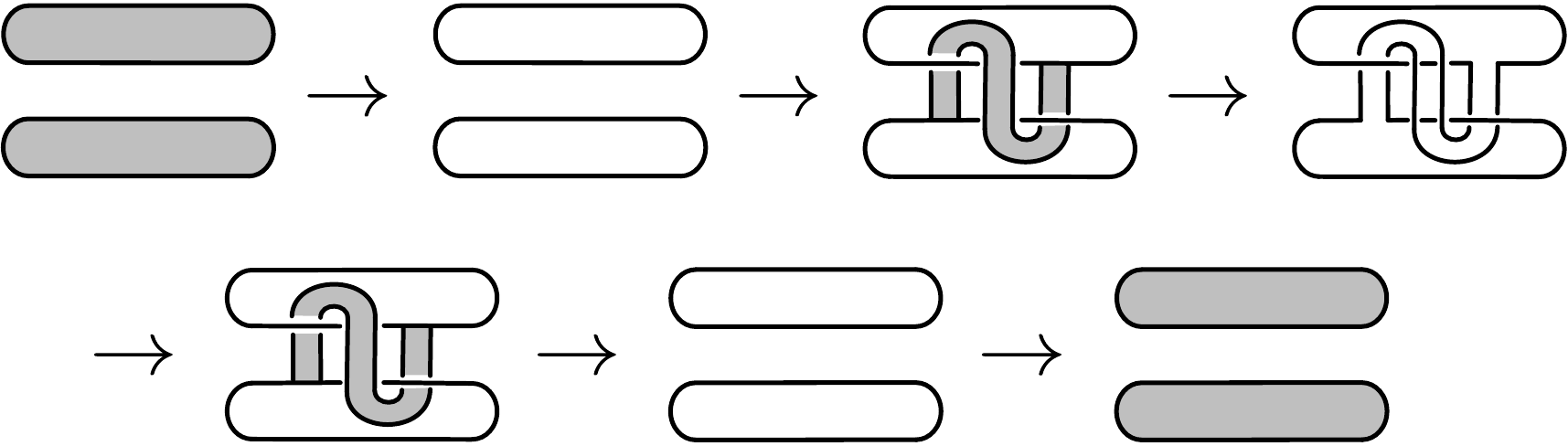}
   \caption{A motion picture of the 2-knot $2\_2$.}
   \label{Figure: mp of 2-2}
\end{figure}

\begin{figure}[h]
   \centering
   \includegraphics[height = 14.5mm]{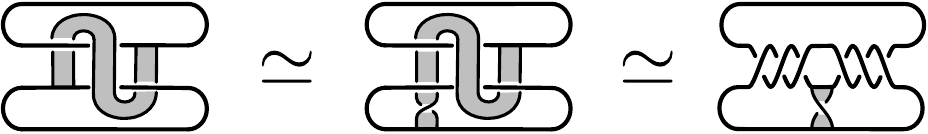}
   \caption{An isotopic deformation of a banded link.}
   \label{Figure: Description of 2-2}
\end{figure}

\begin{figure}[h]
   \centering
   \includegraphics[width = \hsize]{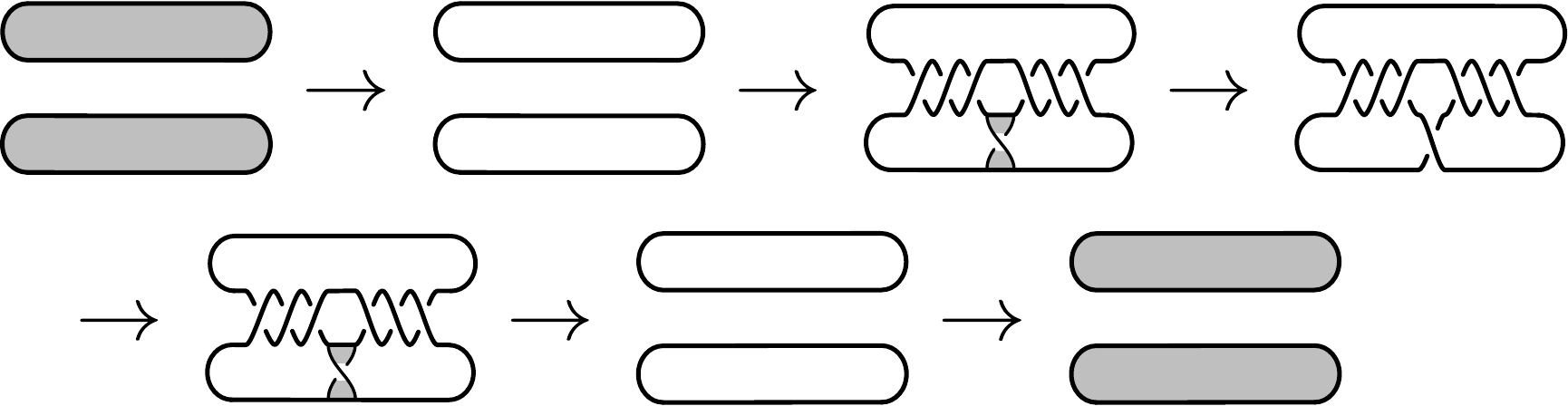}
   \caption{A motion picture of the 2-knot $2\_2$ in a genuine plat form.}
   \label{Figure: mp of 2-2 in a plat form}
\end{figure}

A surface-link is said to be \textit{ribbon} if it is obtained from a trivial $2$-link by some $1$-handle surgeries.

\begin{theorem}\label{Theorem genuine 2-plat 2-knot is ribbon}
   Let $F$ be a $2$-knot (or a surface-link with $\chi(F) = 2$) with $\mathrm{g.Plat}(F) = 2$.
   Then, $F$ is ribbon.
\end{theorem}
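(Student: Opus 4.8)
The plan is to unwind the hypothesis $\mathrm{g.Plat}(F)=2$ into concrete combinatorial data and then show this data forces $F$ to arise from a trivial $2$-link by $1$-handle surgeries. By definition there is a $2$-dimensional braid $S$ of degree $4$ whose plat closure $\widetilde S$ is equivalent to $F$, and since $\chi(F)=2$ we have $\chi(S)=4-r=2$, so $S$ has exactly $r=2$ branch points. Moreover $S$ is a genuine $2$-dimensional braid, so its boundary $\beta_S$ is the trivial $4$-braid; hence the product $\rho_S([\gamma_1])\rho_S([\gamma_2])=\beta_S=1$ in $B_4$. Each $\rho_S([\gamma_i])$ is a conjugate of $\sigma_1^{\pm1}$, and the two signs must be opposite (one positive, one negative branch point) for the product to be trivial; so up to Hurwitz equivalence (Lemma~\ref{Lemma: Hurwitz equivalence}) the braid system is $(w\sigma_1 w^{-1},\, w\sigma_1^{-1} w^{-1})$ for some $w\in B_4$, and by a slide action we may further normalize. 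The first step is therefore to enumerate, up to Hurwitz equivalence, the braid systems of degree-$4$, two-branch-point $2$-dimensional braids; I expect there to be essentially a short list, each of which presents $\widetilde S$ by an explicit normal-form motion picture as in Figure~\ref{Figure: mp of genuine plat cl}.

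The second step is to read off from such a normal-form motion picture a banded-link presentation. Using the analysis of the normal genuine plat form: the bottom consists of the plat closure of the trivial $4$-braid, i.e.\ a $2$-component trivial link (two maxima/minima disks in the motion picture — here $m=2$), the two branch points of $S$ each correspond, after the isotopy of Figure~\ref{Figure: MPs of band and branch} read in reverse, to a single band attached to this trivial link, and the result of the surgery is again the plat closure of the trivial braid, i.e.\ a trivial $2$-knot's worth of top disks. Concretely $\widetilde S$ is the closed realizing surface of an admissible banded link $(L,B)$ where $L$ is a $2$-component trivial link and $B$ consists of $2$ bands, with $L_B$ a trivial (one-component) link; since one $1$-handle joins the two components into one, the remaining band is a trivial handle and the count $\chi = 2$ is consistent. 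Thus $F$ is obtained from the trivial $2$-link $L$ (as a $2$-sphere link: take the trivial realizing surface $\mathbf d\cup L\times I\cup \mathbf D$ of $L$ with no bands) by two $1$-handle surgeries — which is exactly the definition of ribbon.

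The key technical point I would be careful about is matching the two conventions for "band": the bands $B_1^\pm$ that appear when a branch point is traded for a band (Figure~\ref{Figure: MPs of band and branch}) are half-twisted bands, so when $F$ is non-orientable the corresponding handle is an orientation-reversing one and one must check the ribbon definition still applies — but the theorem only claims $F$ is ribbon, and the ribbon property is about being built from a trivial $2$-link by $1$-handle surgeries regardless of framing; the surgeries here are realized by embedded bands in a single $\R^3$ slice of the motion picture, so they are genuine $1$-handle surgeries and the argument goes through for both orientable and non-orientable $F$ with $\chi(F)=2$. The main obstacle will be Step~1: rigorously pinning down the Hurwitz-equivalence classes of the relevant braid systems and verifying that each yields a motion picture of the claimed banded-link shape, rather than producing a more complicated link $L$ that only becomes trivial after further isotopy. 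I would handle this by appealing to the normal-form reduction already set up in Section~\ref{Section: A plat form for surface-links} (the deformation of Figure~\ref{Figure: mp of braid equipped with wickets 2} makes the plat closure of any adequate braid into a trivial link), so that the only nontrivial data left are the two bands coming from the two branch points, which is precisely an admissible banded link with trivial $L$ and $L_B$; then Lemma~\ref{Lemma: 2017_book-thm3.3.2} identifies $F$ with the closed realizing surface of that banded link, and ribbonness follows.
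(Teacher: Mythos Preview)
Your opening reduction is correct and matches the paper: from $\chi(F)=2$ and $\deg S=4$ you get $r=2$ branch points, and since $\beta_S=1$ in $B_4$ the braid system is $(\beta_1,\beta_1^{-1})$ with $\beta_1$ a conjugate of $\sigma_1^{\pm1}$. The gap is in your second step, where you pass from a banded-link presentation to ribbonness.

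You assert that the closed realizing surface $F(L,B)$, with $L$ the $2$-component trivial link and $B$ two bands, coincides with the surface obtained from the trivial $2$-link (the realizing surface of $L$ alone) by two $1$-handle surgeries along $B$. These are \emph{different} surfaces. The closed realizing surface has $c_1$ minima, $|B|$ saddles, and $c_2$ maxima, so $\chi=c_1-|B|+c_2$; here $c_1=c_2=2$, $|B|=2$ gives $\chi=2$. The trivial $2$-link on $L$ followed by two $1$-handle surgeries has $\chi=2c_1-2|B|=0$. So your identification fails already at the level of Euler characteristic. (Your claim that $L_B$ has one component is also off: that would force $\chi(F)=2-2+1=1$, impossible for a closed surface.) More to the point, \emph{every} surface-link is the closed realizing surface of an admissible banded link (Lemma~\ref{Lem:realizing_surface_2}), and ``admissible'' already means $L$ and $L_B$ are trivial; your criterion would make every surface-link ribbon. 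Concretely, the $2$-twist spun trefoil of Proposition~\ref{Proposition plat index of 2-twist spun twist knot} has $\mathrm{Plat}=2$, hence is the plat closure of a degree-$4$ adequate braided surface with two branch points, and your motion-picture analysis applies verbatim to it --- yet it is not ribbon.

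What you never exploit is precisely the symmetry $\beta_2=\beta_1^{-1}$ that separates the genuine case from the merely adequate one. The paper's proof uses this directly: a $2$-dimensional braid with braid system $(\beta_1,\beta_1^{-1})$ is a \emph{ribbon} $2$-dimensional braid in the sense of \cite{Kamada2002_book}, hence equivalent to one symmetric under $t\mapsto 1-t$; the plat closure inherits this symmetry and is then in the symmetric normal form of \cite{K-S-S1982}, which characterizes ribbon surface-links (Theorem~11.4 of \cite{Kamada2002_book}). In motion-picture language, the symmetric form has the \emph{same} band appearing at levels $t$ and $1-t$, so the maxima cap off the same trivial link $L$ as the minima; only then is the surface genuinely the trivial $2$-link for $L$ with $1$-handles attached. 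Without invoking that symmetry, the maxima cap off $L_B$ rather than $L$, and ribbonness does not follow.
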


\begin{proof}
   Let $S$ be a $2$-dimensional braid of degree $4$ with $\widetilde{S} \simeq F$, and $r$ the number of branch points of $S$.
   Since $\chi(F) = 2$, we see that $r = 2$ from $\chi(\widetilde{S}) = 4-r$.
   Let $b_S = (\beta_1, \beta_2) \in (B_4)^2$ be a braid system of $S$.
   Since $S$ is a $2$-dimensional braid, $\beta_S = \beta_1\beta_2 = 1$ in $B_4$, i.e., $\beta_2 = \beta_1^{-1}$.
   A $2$-dimensional braid with a symmetric braid system $(\beta_1, \beta_1^{-1})$ is known as a ribbon $2$-dimensional braid (\cite{Kamada2002_book}), which is equivalent to a $2$-dimensional braid $S'$ in $D_1\times D_2 = D_1\times (I\times [0,1])$ such that $S'$ is symmetric with respect to $t = 1/2$.
   Then the plat closure of $S'$ is symmetric with respect to $t = 1/2$ and it is in a normal form in the sense of \cite{K-S-S1982}.
   Hence $\widetilde{S'}$ is ribbon (cf. Theorem 11.4 of \cite{Kamada2002_book}).
   Since $F \simeq \widetilde{S}$ and $\widetilde{S} \simeq \widetilde{S'}$, $F$ is ribbon.
\end{proof}



\begin{proposition}\label{Proposition plat index of 2-twist spun twist knot}
   Let $k(n)$ be the twist knot ($n \in \Z$) and $F(n)$ the $2$-twist spin of $k(n)$ (\cite{Zeeman1965}).
   Then $\mathrm{Plat}(F(n)) = 2$ holds for $n \neq 0$.
\end{proposition}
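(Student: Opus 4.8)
The plan is to prove the two parts of the statement separately: first $\mathrm{Plat}(F(n)) = 2$ by matching upper and lower bounds, and then $\mathrm{g.Plat}(F(n)) \ge 3$ by combining the classification results of this section with the fact that a nontrivial twist-spun $2$-knot is not ribbon.

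\emph{The equality $\mathrm{Plat}(F(n)) = 2$.} For the lower bound, $F(n)$ is a nontrivial $2$-knot: by Zeeman's fibration theorem its exterior fibers over $S^1$ with fiber the once-punctured $2$-fold cyclic branched cover of $(\R^3, k(n))$, so $\pi_1(\R^4\setminus F(n)) \cong \pi_1(\Sigma_2(k(n))) \rtimes \Z$, and since $k(n)$ is a nontrivial $2$-bridge knot, $\Sigma_2(k(n))$ is a lens space $L(p,q)$ with $p = |\Delta_{k(n)}(-1)| \ge 3$; thus $\pi_1(\R^4\setminus F(n))$ contains the nontrivial finite subgroup $\Z/p$ and is not infinite cyclic, so $F(n)$ is neither a trivial $2$-knot nor a trivial non-orientable surface-knot. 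By Theorem~\ref{Theorem The classification of trivial surface-knots}(1), $\mathrm{Plat}(F(n)) \ne 1$, hence $\mathrm{Plat}(F(n)) \ge 2$. For the upper bound I would exhibit an adequate braided surface $S_n$ of degree $4$ with $\widetilde{S_n} \simeq F(n)$: fix a $4$-plat presentation $k(n) = \widetilde{\beta_n}$, $\beta_n \in B_4$ (twist knots are $2$-bridge), take the standard banded-link motion picture of the $2$-twist spin of a bridge-presented knot — two minimal disks, a single saddle level carrying the bridge tangle of $k(n)$ pushed once through a full twist, two maximal disks — observe that for a $2$-bridge knot this is supported in $4$ strands, and feed the resulting admissible banded link into the construction in the proof of Theorem~\ref{Main theorem-A The existence of a plat form}; this produces a degree-$4$ braided surface with exactly two branch points (consistent with $\chi(F(n)) = 2 = 4-2$), adequate boundary braid, and plat closure $F(n)$. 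For $n = 1$ (where $k(1)$ is the trefoil) this is exactly Figure~\ref{Figure: mp of 2-twist spun trefoil}; the twist knots form a uniform family of $2$-bridge diagrams, so the construction applies verbatim for every $n \in \Z\setminus\{0\}$, only the twisted tangle at the saddle level changing. Hence $\mathrm{Plat}(F(n)) = 2$.

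\emph{The strict inequality $2 < \mathrm{g.Plat}(F(n))$.} Since $F(n)$ is a $2$-knot it is orientable, so by Theorem~\ref{Main Theorem-B The existence of a genuine plat form} it admits a genuine plat form and $\mathrm{g.Plat}(F(n))$ is defined with $\mathrm{g.Plat}(F(n)) \ge \mathrm{Plat}(F(n)) = 2$. It remains to rule out $\mathrm{g.Plat}(F(n)) = 2$. By Theorem~\ref{Theorem genuine 2-plat 2-knot is ribbon}, $\mathrm{g.Plat}(F(n)) = 2$ would force $F(n)$ to be ribbon; but $F(n) = \tau^2 k(n)$ is not ribbon. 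To see this, recall $F(n)$ is a fibered $2$-knot (Zeeman~\cite{Zeeman1965}) whose closed fiber is $\Sigma_2(k(n))$, a lens space with $\pi_1 \cong \Z/p$, $p \ge 3$, whereas the closed fiber of a fibered ribbon $2$-knot is a connected sum of copies of $S^1 \times S^2$ and so has free (in particular torsion-free) fundamental group — a contradiction. (Equivalently, the group of a ribbon $2$-knot is torsion-free, while $\pi_1(\R^4\setminus F(n))$ has torsion.) Therefore $\mathrm{g.Plat}(F(n)) \ne 2$, and together with $\mathrm{g.Plat}(F(n)) \ge 2$ this gives $\mathrm{g.Plat}(F(n)) \ge 3 > 2 = \mathrm{Plat}(F(n))$.

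\emph{Main obstacle.} The only substantial point is the upper bound $\mathrm{Plat}(F(n)) \le 2$: one must write down the degree-$4$ braided surface — equivalently, the $4$-strand banded-link motion picture — explicitly and uniformly in $n$, and check that its plat closure really is the $2$-twist spin of $k(n)$. Everything else reduces to results already available in Section~\ref{Section: existence plat forms} and the present section, together with the classical fact that a nontrivial twist-spun $2$-knot is not ribbon.
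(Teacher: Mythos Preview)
Your overall strategy coincides with the paper's: upper bound $\mathrm{Plat}(F(n))\le 2$ via an explicit degree-$4$ plat form, lower bound via nontriviality and Theorem~\ref{Theorem The classification of trivial surface-knots}(1), and $\mathrm{g.Plat}(F(n))>2$ via Theorem~\ref{Theorem genuine 2-plat 2-knot is ribbon} together with ``$F(n)$ is not ribbon.'' The paper actually says less than you do on the last two points: it simply asserts that $F(n)$ is nontrivial and that it is known not to be ribbon, without the $\pi_1$ arguments you supply (which are correct).

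The one genuine gap is in your route to the upper bound. Feeding a $4$-strand admissible banded link into the construction in the proof of Theorem~\ref{Main theorem-A The existence of a plat form} does \emph{not} produce a degree-$4$ braided surface: Step~1 of that proof invokes Proposition~\ref{Lemma: Birman's criterion for links in a plat form}, which requires a generalized stabilization raising $m_0$ to some $m=|\lambda|\ge m_0$, and nothing forces $m=m_0$. So that route yields only $\mathrm{Plat}(F(n))<\infty$, not $\le 2$. The paper instead cites Kanenobu's explicit motion picture of $F(n)$ (Figure~\ref{Figure: mp of F(n)}, with $m=2n+1$ half-twists) and observes directly that the relevant cross-section is the plat closure of an adequate $4$-braid --- precisely the uniform-in-$n$ generalization of Figure~\ref{Figure: mp of 2-twist spun trefoil} that you gesture at in your last sentences. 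You should take that explicit picture as the definition of your $S_n$ rather than detour through Theorem~\ref{Main theorem-A The existence of a plat form}; you correctly flagged this step as the main obstacle, but the proposed workaround does not close it.
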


\begin{proof}
   The 2-knot $F(n)$ has a motion picture described in \cite{Kanenobu1983} as in Figure~\ref{Figure: mp of F(n)}, where $m = 2n+1$ and a box labeled by $m$ contains $m$ positive half-twists or $-m$ negative half-twists for $m < 0$.
   Since the trivial link depicted in (4) of Figure~\ref{Figure: mp of F(n)} is the plat closure of an adequate braid of degree $4$, this motion picture gives us a (normal) plat form presentation for $F(n)$.
   On the other hand, it is known that $F(n)$ is a non-trivial $2$-knot if $n \neq 0$.
   Hence, we have that $\mathrm{Plat}(F(n)) = 2$.
\end{proof}

\begin{figure}[h]
   \centering
   \includegraphics[width = \hsize]{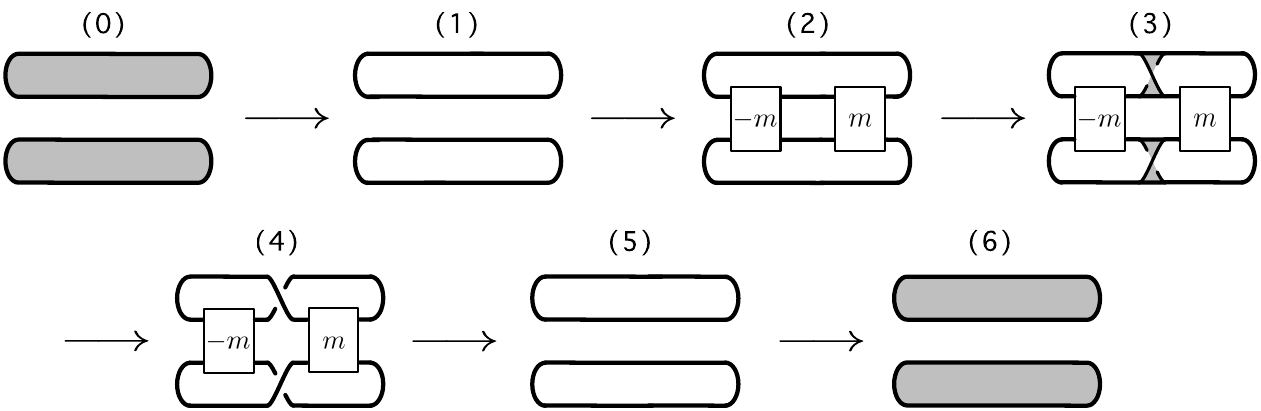}
   \caption{The 2-knot $F(n)$ in a plat form ($m =2n+1$).}
   \label{Figure: mp of F(n)}
\end{figure}

Furthermore, it is known that $F(n)$ is not a ribbon $2$-knot for $n \neq 0$ (\cite{Cochran1983}).
By Theorem~\ref{Theorem genuine 2-plat 2-knot is ribbon}, the genuine plat index of $F(n)$ is greater than $2$.
Thus, Proposition~\ref{Proposition plat index of 2-twist spun twist knot} gives us examples of 2-knots such that the equality in $\mathrm{Plat}(F) \leq \mathrm{g.Plat}(F)$ in Proposition~\ref{Proposition inequality of plat indices} does not hold.

A \textit{$P^2$-link} is a surface-link whose components are projective planes.
Replacing $m = 2n+1$ (or $-m = -2n-1$) crossings in Figure~\ref{Figure: mp of F(n)} with $2n$ (or $-2n$) crossings, respectively, we have a $2$-component $P^2$-link in a plat form.
In particular, in the case of $n = 1$, the $P^2$-link is a $P^2$-link denoted by $8_1^{-1, -1}$ in Yoshikawa's table (\cite{Yoshikawa1994}).

\begin{proposition}\label{Proposition: normal Euler number of genuine plat form}
   Let $F$ be a surface-link in a genuine plat form.
   Each component of $F$ is a surface-knot whose normal Euler number is zero.
\end{proposition}

\begin{proof}
   Each connected component of $F$ is regarded as a surface-knot in a genuine plat form by forgetting other components of $F$.
   Thus it is sufficient to show that $e(F) = 0$ for a surface-knot $F$ in a genuine plat form.

   For a (broken surface) diagram $D$ of $F$ (cf. \cite{Carter-Saito1998}), let $b_+(D)$ (resp. $b_-(D)$) be the number of positive (resp. negative) branch points of $D$. 
   Then, the normal Euler number $e(F)$ is equal to $b_+(D) - b_-(D)$.

   When $F = \widetilde{S}$ is in a genuine plat form, taking a diagram suitably, positive (resp. negative) branch points of $S$ (in the sense of a $2$-dimensional braid) correspond to positive (resp. negative) branch points of $D$, and vise versa.
   Since $S$ is a 2-dimensional braid, the number of positive branch points of $S$ and that of negative branch points of $S$ are the same.
   Thus we have $e(F) = b_+(D) - b_-(D) = 0$.
\end{proof}

It is unknown to the author whether every surface-link consisting of surface-knots whose normal Euler numbers are zero is equivalent to a surface-link in a genuine plat form.

\section*{Acknowledgment}
The author would like to thank Seiichi Kamada and Taizo Kanenobu for their helpful advice on this research.

\bibliographystyle{plain}
\bibliography{reference}
\end{document}